\newtheorem{thm}{Theorem}[section]
\newtheorem{dfn}[thm]{Definition}
\newtheorem{prop}[thm]{Proposition}
\newtheorem{cor}[thm]{Corollary}
\newtheorem{quest}[thm]{Question}
\theoremstyle{remark}
\newtheorem{rem}[thm]{Remark}
\newtheorem{ex}[thm]{Example}
\DeclareMathOperator{\scal}{Scal}
\DeclareMathOperator{\ric}{Ric}
\DeclareMathOperator{\R}{R}
\DeclareMathOperator{\Lop}{L}
\DeclareMathOperator{\Wop}{W}
\DeclareMathOperator{\tr}{trace}
\DeclareMathOperator{\Id}{I}
\DeclareMathOperator{\id}{id}
\DeclareMathOperator{\ad}{ad}
\DeclareMathOperator{\Ad}{Ad}
\DeclareMathOperator{\rg}{rank}
\DeclareMathOperator{\ke}{Ker}
\newcommand{\ddt}[1]{\frac{\partial #1 }{\partial t}}
\newcommand{\ps}[2]{\left\langle #1,#2 \right\rangle}
\newcommand{\eps}{\varepsilon}
\newcommand{\textoverline}[1]{$\overline{\mbox{#1}}$}
\title{Curvature cones and the Ricci flow.}
\author{Thomas Richard}
\begin{document}
\maketitle
%\tableofcontents
%\linenumbers

\begin{abstract}
  This survey reviews some facts about nonnegativity conditions on the
  curvature tensor of a Riemannian manifold which are preserved by the
  action of the Ricci flow. The text focuses on two main points.

  First we
  describe the known examples of preserved curvature conditions and
  how they have been used to derive geometric results, in particular sphere
  theorems.

  We then describe some recent results which give restrictions
  on general preserved conditions.

  The paper ends with some open questions on these matters.
\end{abstract}

The Ricci flow is the following evolution equation:
\begin{equation}
\begin{cases}
  \frac{\partial g}{\partial t}=-2\ric_{g(t)}\\
  g(t)=g_0
\end{cases}\label{eq:RF}
\end{equation}
where $\left(g(t)\right)_{t\in[0,T)}$ is a one-parameter family of smooth
Riemannian metrics on a fixed manifold $M$, and $g_0$ is a given
smooth Riemannian metric on $M$. It was introduced by R. Hamilton in 1982
(\cite{zbMATH03794902}), where it was used to study the topology of compact $3$-manifolds with
positive Ricci curvature. Analytically, the Ricci flow is a degenerate
parabolic system. Existence and uniqueness for the Cauchy problem
\eqref{eq:RF} have been established by
Hamilton in the case where $M$ is compact and $g_0$ is smooth.

For a general introduction to the Ricci flow, see the books \cite{zbMATH02121403},
\cite{zbMATH05062652}, \cite{zbMATH05081815}. Since Hamilton's work,
the Ricci flow has been used to solve various geometric problems. We
refer to the previously cited books for examples. Here we will just briefly mention two of the biggest geometric
achievements of Ricci flow: 
\begin{itemize}
\item The proof of Thurston's Geometrization conjecture for
  $3$-manifolds by
  G. Perelman, (\cite{2002math.....11159P},\cite{2003math......7245P},\cite{2003math......3109P}), for an exposition of Perelman's proof see
  (\cite{zbMATH05188193}, \cite{zbMATH05796326},\cite{zbMATH05530173}).
\item The proof of the differentiable sphere theorem, by Brendle and
  Schoen (\cite{zbMATH05859406}), see also the books (\cite{zbMATH05673930},\cite{zbMATH05821328}).
\end{itemize}
The objects we will be dealing with in this survey have little to do with
Perelman's work, but were pivotal in the proof of the differentiable
sphere theorem.

A priori estimates are among the most basic tools in the study of PDEs,
one can divide them into two classes: integral estimates and pointwise
estimates. Pointwise estimates often come from suitable maximum principles
and are thus more often encountered in the realm of parabolic or elliptic
equations. The Ricci flow being a geometric parabolic PDE, it is
tempting to look for geometrically meaningful pointwise estimates for Ricci flows.

If one looks at Hamilton's foundational work, one sees that after
having proven short time existence and derived some variation formulas
for the Ricci flow, Hamilton proves the following result:
\begin{prop}
  Let $(M^3,g_0)$ be a compact $3$-manifold with nonnegative Ricci
  curvature, then the solution $g(t)$ to \eqref{eq:RF} with initial
  condition $g_0$ satisfies $\ric_{g(t)}\geq 0$ for all $t\geq 0$.
\end{prop}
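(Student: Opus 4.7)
The plan is to apply Hamilton's parabolic maximum principle for tensors, which reduces the preservation of a fiberwise convex curvature condition along \eqref{eq:RF} to the preservation of the condition under an associated pointwise reaction ODE. I would take the closed convex set $K \subset S^2 T^* M$ to be the cone of nonnegative symmetric $2$-tensors; invariance of $K$ under the parallel transport of $g(t)$ is automatic, so the only substantive task is to verify that $K$ is preserved by the ODE obtained from the evolution equation of $\ric$ after dropping the Laplacian.

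The first concrete step is to derive that evolution equation. A direct computation from \eqref{eq:RF} and the standard variation formulas yields
$$\frac{\partial \ric_{ij}}{\partial t} = \Delta \ric_{ij} + 2\, R_{ikjl}\,\ric^{kl} - 2\, \ric_{ik}\,\ric^{k}_{\ j},$$
i.e.\ $\partial_t \ric$ equals the Lichnerowicz Laplacian of $\ric$. In dimension $3$ the Weyl tensor vanishes and the full Riemann tensor $R_{ikjl}$ can be written algebraically in terms of $\ric$, $\scal$ and $g$. Substituting this identity and contracting produces a closed reaction--diffusion equation $\partial_t \ric = \Delta \ric + F(\ric,g)$ with $F$ quadratic in $\ric$; the explicit expression turns out to be
$$F(\ric)_{ij} = 3\,\scal\,\ric_{ij} - 6\,\ric_{ik}\,\ric^{k}_{\ j} + \bigl(2|\ric|^2 - \scal^2\bigr)\, g_{ij}.$$

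By convexity of $K$, the ODE invariance reduces to a boundary check: if $\ric \geq 0$ at a point $p$ and $v \in T_pM$ is a null eigenvector of $\ric$, then one must show $F(\ric,g)(v,v) \geq 0$. The first two terms of $F$ vanish on $v$, because each contains a factor $\ric \cdot v = 0$, and the remaining contribution is $(2|\ric|^2 - \scal^2)\,|v|^2$. Writing the nonzero eigenvalues of $\ric$ at $p$ as $\lambda_2, \lambda_3 \geq 0$, one computes $2|\ric|^2 - \scal^2 = (\lambda_2 - \lambda_3)^2 \geq 0$, which settles the ODE check. The main obstacle is the algebraic manipulation leading to the explicit $F$: one must carefully expand the dimension-$3$ formula for the Riemann tensor, contract it against $\ric^{kl}$, and keep track of signs and coefficients. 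Once this is done, the positivity of $2|\ric|^2 - \scal^2$ on the boundary of $K$ is a low-dimensional coincidence, which explains why the analogous preservation statement fails (or requires substantially stronger hypotheses) in dimensions $\geq 4$.
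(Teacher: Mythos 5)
Your argument is correct and is essentially the approach the paper has in mind: the proposition is quoted from Hamilton's 1982 paper, and in the survey's language it is exactly the statement that the cone $\mathcal{C}_{\ric}$ is Ricci flow invariant in dimension $3$, proved by Hamilton's tensor maximum principle together with a reaction-term check on the boundary of the cone. Your reduction, the explicit $F(\ric)$ obtained from the vanishing of the Weyl tensor, and the identity $2|\ric|^2-\scal^2=(\lambda_2-\lambda_3)^2$ at a null eigenvector are all correct; working with the Ricci tensor rather than the full curvature operator $\R$ and $Q(\R)=\R^2+\R^{\#}$ is an equivalent packaging in dimension $3$, where $\ric$ determines $\R$.
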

This is the kind of geometric pointwise estimate we will be
concerned with. More precisely, we will try to gather what is known
about various nonnegativity properties of the curvature of Riemannian
manifold $(M,g_0)$ which remain valid for solutions $g(t)$ of the
Ricci flow starting at $g_0$.

Let us now describe the contents of this paper. Section 1 sets the
scene by introducing an abstract framework which allows us to consider nonnegativity
conditions on the curvature as convex cones inside some vector space satisfying some
invariance properties: the so-called curvature cones. We then describe how Hamilton's maximum
principle characterizes which curvature cones lead to a nonnegativity
condition on the curvature which is preserved under the action of the
Ricci flow. In section 2 we review the various known examples of Ricci
flow invariant curvature conditions, and see how they have been used
to prove various sphere theorems. Section 3 deals with known
restrictions on those Ricci flow invariant curvature conditions.
In section 4 we gather some open questions on these matters.

Let us end this introduction with a disclaimer: this survey doesn't
claim any originality in the treatment of the subject and draws
heavily on the available litterature. Its main purpose is to gather
results which were only available in separate articles before.

\subsection*{Acknowledgements}
\label{sec:acknowledgements}

The author thanks Harish Seshadri for his advised comments on a
preliminary version of this text.

\section{Curvature cones}
\label{sec:curvature-cones}

\subsection{Definition and first properties}
\label{sec:defin-first-prop}
Let us consider a Riemannian manifold $(M,g)$. We recall that its
Riemann curvature tensor $R$ is the $(3,1)$ tensor defined by
\[R(X,Y)Z=\left(\nabla_Y\nabla_X-\nabla_X\nabla_Y-\nabla_{[Y,X]}\right)Z\]
where $\nabla$ is the Levi-Civitta connection canonically built from
the metric $g$ and $X,Y,Z$ are vector fields on $M$. (Beware here that
there is no universal convention for the sign of $R$.)

$R$ enjoys the following symmetries:
\begin{itemize}
\item $R(x,y)z=-R(y,x)z$,
\item $R(x,y)z+R(y,z)x+R(z,x)y=0$,
\item $g(R(x,y)z,t)=-g(R(x,y)t,z)$.
\end{itemize}

Using the symmetries of $R$, on can build at each $p\in M$ a symmetric endomorphism $\R$ of
$\Lambda^2T_pM$ by
\[g(\R(x\wedge y),z\wedge t)=g(R(x,y)z,t) \]
for $x,y,z,t\in T_pM$. $\R$ is called the \emph{curvature operator} of $(M,g)$.
\begin{rem}
  Here, as in the rest of the paper, the inner product $g$ on $\Lambda^2T_pM$ is
  the one which comes from the metric $g$ by the
  following formula:
  \[g(x\wedge y,z\wedge t)=g(x,z)g(y,t)-g(x,t)g(y,z)\]
  (extended to non simple elements of $\Lambda^2T_pM$ by bilinearity).

  We will use the same construction to endow $\Lambda^2\mathbb{R}^n$
  with an inner product $\ps{\ }{\ }$ coming from the standard inner
  product $\ps{\ }{\ }$ on $\mathbb{R}^n$. 
\end{rem}
\begin{dfn}
  The space of algebraic curvature operators
  $S^2_B\Lambda^2\mathbb{R}^n$ is the space of symmetric endomorphisms
  $\R$ of $\Lambda^2\mathbb{R}^n$ which satisfy the first Bianchi identity:
  \[\forall x,y,z,t\in\mathbb{R}^n\quad \ps{\R(x\wedge y)}{z\wedge t}
  +\ps{\R(z\wedge x)}{y\wedge t}+\ps{\R(y\wedge z)}{x\wedge t}=0.\]
\end{dfn}
Remark that $S^2_B\Lambda^2\mathbb{R}^n$ has a natural inner product
given by: \[\ps{\R}{\Lop}=\tr (\R\Lop^T).\]

The space of algebraic curvature operators is the space of 
operators which satisfy the same symmetries as the curvature operators
of Riemannian manifolds. As in the case of Riemannian manifolds, 
it is interesting to consider the Ricci morphism:
$\rho:S^2_B\Lambda^2\mathbb{R}^n\to S^2\mathbb{R}^n$ which associates
to an algebraic curvature operator $\R$ its Ricci tensor which is a
symmetric operator on $\mathbb{R}^n$ defined by:
\[\ps{\rho(\R)x}{y}=\sum_{i=1}^n \ps{\R(x\wedge e_i)}{y\wedge e_i}\]
where $(e_i)_{1\leq i\leq n}$ is an orthonormal basis of
$\mathbb{R}^n$. $\R$ is said to be Einstein if $\rho(\R)$ is a
multiple of the identity operator
$\id:\mathbb{R}^n\to\mathbb{R}^n$. Similarly, the scalar curvature of
an algebraic 
curvature operator is just twice its trace.

The action of $O(n,\mathbb{R})$ on $\mathbb{R}^n$ induces the
following action of $O(n,\mathbb{R})$ on $S^2_B\Lambda^2\mathbb{R}^n$: 
\begin{equation}
\ps{g.\R(x\wedge y)}{z\wedge t}=\ps{\R(gx\wedge gy)}{gz\wedge gt}.\label{eq:action}
\end{equation}

The representation of $O(n,\mathbb{R})$ given by its
action on $S^2_B\Lambda^2\mathbb{R}^n$ is decomposed into irreducible
representations in the following way:
\begin{equation}
S^2_B\Lambda^2\mathbb{R}^n=\mathbb{R}\Id\oplus
(S^2_0\mathbb{R}^n\wedge\id)\oplus \mathcal{W}\label{eq:decomp}
\end{equation}
where the space of Weyl curvature operators $\mathcal{W}$ is the
kernel of the Ricci endomorphism $\rho:S^2_B\Lambda^2\mathbb{R}^n\to
S^2\mathbb{R}^n$ and $S^2_0\mathbb{R}^n\wedge\id$ is the image of the space of traceless
endomorphims of $\mathbb{R}^n$ under the application $A_0\mapsto
A_0\wedge\id$. The wedge product of two symmetric operators
$A,B:\mathbb{R}^n\to \mathbb{R}^n$ is defined by
\[(A\wedge B)(x\wedge y)=\frac{1}{2}\left ( Ax\wedge By+Bx\wedge
  Ay\right).\]

This corresponds to half the Kulkarni-Nomizu product of $A$ and
$B$ viewed as quadratic forms.
In dimension 2, only the first summand of
\eqref{eq:decomp} exists. In dimension $3$ the $\mathcal{W}$
factor is $0$. Starting in dimension 4, all three components exist.

When needed, we will write $\R=\R_{\Id}+\R_0+\R_\mathcal{W}$ the
decomposition of a curvature operator along the three irreducible
components of \eqref{eq:decomp}.

\begin{dfn}
  A curvature cone is a \emph{closed convex} cone $\mathcal{C}\subset
  S^2_B\Lambda^2\mathbb{R}^n$ which is invariant under the action of
    $O(n,\mathbb{R})$ given by \eqref{eq:action}.
\end{dfn}

\begin{dfn}
  A curvature cone is said to be nonnegative if it contains the identity operator
    $\Id:\Lambda^2\mathbb{R}^n\to\Lambda^2\mathbb{R}^n$ in its
    \emph{interior}.
\end{dfn}

% \begin{rem}
%   The condition that $\Id$ is in the interior of $\mathcal{C}$ implies
%   that $\mathcal{C}$ has full dimension.
% \end{rem}

This definition can be tracked back to the article \cite{zbMATH00709417} of
M. Gromov. One should notice that we require the cone to be invariant
under the full orthogonal group $O(n,\mathbb{R})$, rather than under
the special orthogonal group $SO(n,\mathbb{R})$. This makes a
difference only in dimension 4, where the action of $SO(4,\mathbb{R})$
on the space of Weyl tensors is not irreducible. The behavior of these
``oriented" curvature cones will be briefly addressed in section \ref{sec:case-dimension-4}.

Each nonnegative curvature cone $\mathcal{C}$ can be used to define a nonnegativity
condition on the curvature 
of Riemannian manifolds. The curvature operator
$\R$ of a Riemannian manifold $(M,g)$ is a section of the bundle $S^2_B\Lambda^2TM$
which is built from $TM$  the same way $S^2_B\Lambda^2\mathbb{R}^n$ is
built from $\mathbb{R}^n$. For each $x\in M$, one can choose an
orthonormal basis of $T_xM$ to build an isomorphism between
$S^2_B\Lambda^2T_xM$ and $S^2_B\Lambda^2\mathbb{R}^n$. Thanks to the
$O(n,\mathbb{R})$-invariance of $\mathcal{C}$, this allows us to embed
$\mathcal{C}$ in $S^2_B\Lambda^2T_xM$ in a way which is independent
of the basis of $T_xM$ we started with.

\begin{dfn}
  Let $\mathcal{C}$ be a nonnegative curvature cone.

  A Riemannian manifold $(M,g)$ has \emph{$\mathcal{C}$-nonnegative
  curvature} if for any $x\in M$ the curvature operator of $(M,g)$ at
  $x$ belongs to the previously discussed embedding of $\mathcal{C}$
  in $S^2_B\Lambda^2T_xM$.

  Similarly, $(M,g)$ has \emph{positive $\mathcal{C}$-curvature} if its
  curvature operator at each point is in the interior of
  $\mathcal{C}$.
\end{dfn}

Let us give a couple of examples of curvature cones:
\begin{itemize}
\item $\mathbb{R}\Id$, $S^2_0\mathbb{R}^n\wedge\id$ and $\mathcal{W}$
  are curvature cones, which are not nonnegative.
\item $\{\R|\tr \R\geq 0\}$ is a nonnegative curvature curvature
  cone. The corresponding nonnegativity condition is ``nonnegative
  scalar curvature''.
\item Similarly the conditions ``nonnegative Ricci curvature'',
  ``nonnegative sectionnal curvature'' and ``nonnegative
  curvature operator'' define nonnegative curvature cones.
\end{itemize}

% By
% definition, the sphere $\mathbb{S}^n$ has positive
% $\mathcal{C}$-curvature for all curvature cones $\mathcal{C}$.

With these examples in mind we can go back to the definition and try
to explain its significance.
\begin{itemize}
\item The fact that $\mathcal{C}$ is $O(n,\mathbb{R})$-invariant is mandatory to
  be able make sense of the ``$\mathcal{C}$ nonnegative curvature'' condition.
\item Requiring that $\mathcal{C}$ is a cone ensures that the
  associated geometric condition is invariant under scalings, which is
  expected from a nonnegativity condition on the curvature.
\item Asking for $\Id$ to be in the interior of $\mathcal{C}$ is
  equivalent to requiring that the round sphere has positive
  $\mathcal{C}$-curvature. It also ensure that the ``positive
  $\mathcal{C}$-curvature'' condition is stable with respect to $C^2$ perturbations
  on the space of Riemannian metrics.
\item It is not so clear why one should ask for convexity of
  $\mathcal{C}$, however it is satisfied by all classical curvature
 conditions and turns out to be a crucial hypothesis when dealing
  with Hamilton's maximum principle (see next section).  
\end{itemize}

We have the following elementary observation, 
whose elementary proof is not written anywhere as far as the author knows:
\begin{prop}
  If $\mathcal{C}$ is a nonnegative curvature cone which is not the
  full space $S^2_B\Lambda^2\mathbb{R}^n$, then $\mathcal{C}\subset\{\R|\tr \R\geq 0\}$.
\end{prop}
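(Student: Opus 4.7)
The natural approach is contrapositive: assume $\mathcal{C}$ contains some element $\R_0$ with $\tr \R_0 < 0$, and show that this forces $\mathcal{C}$ to be the entire space $S^2_B\Lambda^2\mathbb{R}^n$.

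The key idea is averaging. Since $\mathcal{C}$ is closed, convex, and $O(n,\mathbb{R})$-invariant, the Haar-average
\[\overline{\R}_0 \;=\; \int_{O(n,\mathbb{R})} g\cdot\R_0\,dg\]
is a limit of convex combinations of elements $g\cdot\R_0\in\mathcal{C}$, hence lies in $\mathcal{C}$. Now $\overline{\R}_0$ is by construction $O(n,\mathbb{R})$-invariant, and I would exploit the decomposition \eqref{eq:decomp} to identify it: the summands $S^2_0\mathbb{R}^n\wedge\id$ and $\mathcal{W}$ are nontrivial irreducible representations of $O(n,\mathbb{R})$, so they contain no nonzero invariant vectors, and therefore the only $O(n,\mathbb{R})$-fixed operators form the line $\mathbb{R}\Id$. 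Consequently $\overline{\R}_0 = c\,\Id$, where $c$ is determined by the trace: taking traces of both sides gives $c = \tr\R_0/\binom{n}{2} < 0$. Since $\mathcal{C}$ is a cone, we conclude $-\Id\in\mathcal{C}$.

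Now I combine this with the interior hypothesis. Because $\Id$ lies in the interior of $\mathcal{C}$, there exists $\varepsilon>0$ such that $\Id + v\in\mathcal{C}$ for every $v\in S^2_B\Lambda^2\mathbb{R}^n$ with $\|v\|<\varepsilon$. Pairing each such point with $-\Id\in\mathcal{C}$ via the midpoint of the segment gives
\[\tfrac{1}{2}(\Id+v) + \tfrac{1}{2}(-\Id) \;=\; \tfrac{1}{2}v \;\in\;\mathcal{C},\]
so an open ball around $0$ sits inside $\mathcal{C}$. A closed convex cone containing an open neighbourhood of the origin must be the whole ambient space (any $w$ can be scaled into the ball, then scaled back out using the cone property). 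This contradicts the assumption that $\mathcal{C}\neq S^2_B\Lambda^2\mathbb{R}^n$.

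The only nontrivial input is the representation-theoretic fact that $\mathbb{R}\Id$ is the unique trivial isotypical component of $S^2_B\Lambda^2\mathbb{R}^n$ under $O(n,\mathbb{R})$; everything else is a short convexity/cone manipulation. I expect the mild subtlety to be justifying the averaging step cleanly in the closed convex setting (approximating the integral by Riemann sums of convex combinations) and making sure that the argument genuinely uses $O(n,\mathbb{R})$ rather than only $SO(n,\mathbb{R})$—but in fact $\mathbb{R}\Id$ is already the invariant line under $SO(n,\mathbb{R})$, so this distinction does not affect the conclusion.
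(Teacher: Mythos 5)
Your proof is correct and follows essentially the same route as the paper's: average the negative-trace element over $O(n,\mathbb{R})$, use the irreducibility of $S^2_0\mathbb{R}^n\wedge\id$ and $\mathcal{W}$ to conclude the average is a negative multiple of $\Id$, then combine with the interior hypothesis to force $\mathcal{C}$ to be the whole space. Your final convexity step is just a more explicit version of what the paper leaves implicit.
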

\begin{proof}
Let us assume that $\mathcal{C}$ contains a curvature operator $\R$
with negative trace. Consider the average
$\tilde{\R}=\int_{O(n)}g\cdot\R dg$ of the $O(n)$-orbit of $\R$ with
respect to the Haar measure $dg$ on $O(n)$. 

The irreducibility of
$\mathcal{W}$ and $S^2_0\mathbb{R}^n\wedge\id$ imply that the
projection of $\tilde{\R}$ on these subspaces vanishes, thus
$\tilde{\R}=\lambda\Id$ where $\lambda<0$ since
$\tr\tilde{\R}=\tr\R<0$. Hence $\mathcal{C}$ contains the whole line
$\mathbb{R}\Id$ and since $\Id$ is in the interior of $\mathcal{C}$,
this implies that $\mathcal{C}=S^2_B\Lambda^2\mathbb{R}^n$.
\end{proof}
\subsection{Cones which behave well under the Ricci flow}
\label{sec:cones-which-behave}

We now consider the interplay between these curvature cones and the
Ricci flow. If $(M,g(t))$ is a Ricci flow, Hamilton has proved in \cite{zbMATH04022085} that
the curvature operator $\R_{g(t)}$ of $(M,g(t))$ satisfies the
following evolution equation:
\[\ddt{\R_{g(t)}}=\Delta_{g(t)}\R_{g(t)}+2Q(\R_{g(t)})\]
where $Q$ is the $O(n,\mathbb{R})$ quadratic vector field on
$S^2_B\Lambda^2\mathbb{R}^n$ defined by:
\[Q(\R)=\R^2+\R^\#.\]
Here, $\R^2$ is just the square of $\R$ seen as an endomorphism of
$\Lambda^2\mathbb{R}^n$.  $\R^\#$ is defined in the following way:
\[\ps{\R^\#\eta}{\eta}=-\frac{1}{2}\tr(\ad_\omega\circ\R\circ\ad_\omega\circ\R)\]
where $\ad_\omega:\Lambda^2\mathbb{R}^n\to\Lambda^2\mathbb{R}^n$ is
the endomorphism $\eta\mapsto [\omega,\eta]$. In the previous formula,
the Lie bracket $[\ ,\ ]$ on $\Lambda^2\mathbb{R}^n$ comes from its
identification with $\mathfrak{so}(n,\mathbb{R})$ given by:
\[x\wedge y\mapsto (u\mapsto \ps{x}{u}y-\ps{y}{u}x).\]
This expression for $\R^\#$ can be found in \cite{zbMATH05578712}.

We will sometimes use the bilinear map $B$ associated to the quadratic map
$Q$, it is defined in the usual way:
\[B(\R_1,\R_2)=\frac{1}{2}\bigl(Q(\R_1+\R_2)-Q(\R_1)-Q(\R_2)\bigr). \]

We are now ready to define a Ricci flow invariant curvature cone:

\begin{dfn}
  A curvature cone $\mathcal{C}$ is said to be Ricci flow invariant if for any
  $\R$ in the boundary $\partial\mathcal{C}$ of $\mathcal{C}$,
  $Q(\R)\in T_{\R}\mathcal{C}$, the tangent cone at $\R$ to $\mathcal{C}$.
\end{dfn}
\begin{rem}
  In other words, a cone is Ricci flow invariant if at every
  $\R\in\partial\mathcal{C}$, $Q(\R)$ points towards the inside of $\mathcal{C}$.

  This condition is equivalent to the fact that the solutions to the ODE
  $\frac{d}{dt}\R=Q(\R)$ which start inside $\mathcal{C}$ stay in
  $\mathcal{C}$ for \emph{positive} times.
\end{rem}

Hamilton's maximum principle (see \cite{zbMATH04022085}) implies:
\begin{thm}
  Let $\mathcal{C}$ be a Ricci flow invariant curvature cone.
  If $(M,g(t))_{t\in[0,T)}$ is a Ricci flow on a compact manifold such
  that $(M,g(0))$ has $\mathcal{C}$-nonnegative curvature, then for
  $t\in[0,T)$, $(M,g(t))$ has $\mathcal{C}$-nonnegative curvature.
\end{thm}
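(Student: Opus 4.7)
The plan is to apply Hamilton's maximum principle for sections of a vector bundle to the evolution equation $\partial_t \R = \Delta \R + 2Q(\R)$. The diffusion term $\Delta \R$ is compatible with $\mathcal{C}$ because the connection induced on $S^2_B\Lambda^2 TM$ by the Levi-Civita connection preserves the $O(n,\mathbb{R})$-structure, and hence the fiberwise embedding of the $O(n,\mathbb{R})$-invariant cone $\mathcal{C}$ is parallel. The reaction term $2Q(\R)$ is controlled on $\partial\mathcal{C}$ precisely by the Ricci flow invariance hypothesis.

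First I would use the dual description $\mathcal{C} = \{\R : \ell(\R) \geq 0 \text{ for all } \ell \in \mathcal{C}^*\}$, where $\mathcal{C}^* \subset S^2_B\Lambda^2\mathbb{R}^n$ is the dual cone (identifying the ambient space with its dual via the inner product). Then $\R \in \partial\mathcal{C}$ iff some nontrivial $\ell \in \mathcal{C}^*$ has $\ell(\R) = 0$, and for such a supporting $\ell$ the tangent-cone condition $Q(\R) \in T_\R\mathcal{C}$ becomes $\ell(Q(\R)) \geq 0$. Fix any $T' < T$. Since $\Id \in \text{int}\,\mathcal{C}$, the compactness of $\{\ell \in \mathcal{C}^* : |\ell|=1\}$ yields $\delta := \inf\{\ell(\Id) : \ell \in \mathcal{C}^*,\, |\ell|=1\} > 0$ (otherwise a unit $\ell$ would vanish on a neighborhood of $\Id$, hence everywhere).

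Next, introduce the perturbation $\R_\eps(x,t) := \R(x,t) + \eps e^{A t} \Id$, with $\eps > 0$ small and a constant $A > 0$ to be chosen. At $t=0$, $\R_\eps(\cdot, 0) \in \text{int}\,\mathcal{C}$. If the conclusion fails for $\R_\eps$ on $M \times [0, T']$, the compactness of $M$ and the continuity of $\R$ supply a first time $t_0 \in (0, T']$ and a point $x_0 \in M$ with $\R_\eps(x_0, t_0) \in \partial\mathcal{C}$. Pick a unit supporting functional $\ell$ there and extend it to a neighborhood by parallel-transporting an orthonormal frame at $x_0$; since parallel transport preserves the $O(n,\mathbb{R})$-structure and $\mathcal{C}^*$ is $O(n,\mathbb{R})$-invariant, the transported $\ell$ stays in $\mathcal{C}^*$ pointwise. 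Thus $u(x,t) := \ell(\R_\eps(x,t)) \geq 0$ on $M \times [0, t_0]$ with $u(x_0, t_0) = 0$; the standard first/second-order conditions give $\partial_t u \leq 0$ and $\Delta u \geq 0$ at $(x_0, t_0)$. The evolution equation, combined with the parallelism of $\Id$ and of $\ell$ at $x_0$, gives $(\partial_t - \Delta) u = 2 \ell(Q(\R)) + A \eta\, \ell(\Id)$ with $\eta(t) = \eps e^{At}$. Expanding $Q(\R) = Q(\R_\eps) - 2\eta B(\R, \Id) - \eta^2 Q(\Id)$, applying $\ell$, using the Ricci flow invariance $\ell(Q(\R_\eps(x_0, t_0))) \geq 0$, and bounding the remainders by constants $C_1, C_2$ depending only on the flow on $[0, T']$ (via compactness of $M \times [0, T']$ and the normalization $|\ell|=1$), the inequality $(\partial_t - \Delta) u \leq 0$ rearranges to
\[
A \delta \leq A\,\ell(\Id) \leq 4 C_1 + 2 \eta C_2.
\]
Choosing $A > 4 C_1/\delta$ and then $\eps$ small enough that $2 C_2 \eps e^{A T'} < A \delta - 4 C_1$ produces a contradiction. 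Hence $\R_\eps \in \text{int}\,\mathcal{C}$ on $M \times [0, T']$; letting $\eps \to 0$ and then $T' \to T$ and using the closedness of $\mathcal{C}$ concludes.

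The main obstacle is that the ODE invariance and the parabolic maximum-principle inequalities at a first contact point are both nonstrict, so they can hold simultaneously as equalities and do not directly contradict each other. The exponential perturbation $\eps e^{At} \Id$, with $A$ chosen to exceed the threshold imposed by the linear term in the expansion of $Q(\R + \eta \Id)$, supplies the needed strict margin; checking that this threshold is uniform in the supporting functional $\ell$ (via $\delta$) and that $\eps$ can be taken small enough to absorb the quadratic remainder uniformly on $[0, T']$ is the real technical core of the argument.
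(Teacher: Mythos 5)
Your proof is correct and is essentially the standard argument behind Hamilton's maximum principle for invariant convex sets (support functionals from the dual cone, parallel extension, first contact point, and the $\eps e^{At}\Id$ perturbation to turn the nonstrict ODE-invariance into a strict barrier), which is exactly what the paper invokes by citing Hamilton without reproducing the proof. The only point worth making explicit is that the clean evolution equation $\partial_t\R=\Delta\R+2Q(\R)$ presupposes the Uhlenbeck trick (a fixed bundle with time-independent fiber metric), which is what legitimizes treating $\ell$ and $\Id$ as constant in time at the contact point.
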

\begin{rem}
  It could happen that a nonnegativity condition is preserved under
  the Ricci flow while the associated cone is not Ricci flow invariant
  according to our definition, however such examples are not known to
  exist, as far as the knowledge of the author goes.
\end{rem}

\section{Examples of Ricci flow invariant curvature cones}
\label{sec:exemples-ricci-flow}

\subsection{First examples}
\label{sec:first-examples}

We will first give two prototypes of Ricci flow invariant cones. The
discovery of the Ricci flow invariance of these cones is due to Hamilton.

\begin{prop}
  $\mathcal{C}_{\scal}=\{\R|\tr \R\geq 0\}$ is a Ricci flow invariant nonnegative
  curvature cone.
\end{prop}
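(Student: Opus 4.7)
The plan is to verify the two defining conditions—being a nonnegative curvature cone and being Ricci flow invariant—separately.

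For the first condition, $\mathcal{C}_{\scal}$ is the half-space defined by the linear inequality $\tr\R\geq 0$, hence it is automatically closed and convex. The $O(n,\mathbb{R})$-action \eqref{eq:action} is (isomorphic to) conjugation of $\R$ by the induced orthogonal operator on $\Lambda^2\mathbb{R}^n$, so the trace is invariant and $\mathcal{C}_{\scal}$ is $O(n,\mathbb{R})$-invariant. Finally $\tr\Id=\binom{n}{2}>0$ (for $n\geq 2$), so $\Id$ lies in the open interior $\{\tr\R>0\}$.

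For the second condition, the boundary is the hyperplane $\partial\mathcal{C}_{\scal}=\{\R\mid\tr\R=0\}$, and since $\mathcal{C}_{\scal}$ is a half-space the tangent cone at every boundary point is $\mathcal{C}_{\scal}$ itself. So Ricci flow invariance reduces to the purely algebraic statement: $\tr\R=0\Longrightarrow\tr Q(\R)\geq 0$. I plan to prove the stronger pointwise identity
\[\tr Q(\R)=\tfrac{1}{2}|\rho(\R)|^2,\]
valid for every $\R\in S^2_B\Lambda^2\mathbb{R}^n$, whose manifest nonnegativity then gives invariance on the nose (and a bit more).

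The main technical step is thus the identity $\tr(\R^2)+\tr(\R^\#)=\tfrac{1}{2}|\rho(\R)|^2$. I would compute both traces in the orthonormal basis $\{e_i\wedge e_j\}_{i<j}$ of $\Lambda^2\mathbb{R}^n$ coming from an orthonormal basis $(e_i)$ of $\mathbb{R}^n$. The term $\tr(\R^2)=\|\R\|^2$ is immediate. For $\tr(\R^\#)$, I would evaluate $\ps{\R^\#(e_i\wedge e_j)}{e_i\wedge e_j}$ using the formula for $\R^\#$ in the excerpt together with the identification $\Lambda^2\mathbb{R}^n\cong\mathfrak{so}(n,\mathbb{R})$, then sum over $i<j$; invoking the first Bianchi identity to collapse the resulting sum of products of components of $\R$ should leave precisely $\tfrac{1}{2}|\rho(\R)|^2-\|\R\|^2$. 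The bookkeeping here (indices, signs, the factor $\tfrac{1}{2}$ hidden in the Lie bracket on $\Lambda^2\mathbb{R}^n$) is the primary obstacle: conceptually routine but easy to botch. A reassuring sanity check is that, plugged into $\partial_t\R=\Delta\R+2Q(\R)$ together with $\scal=2\tr\R$ and $\ric=\rho(\R)$, the identity reproduces Hamilton's formula $\partial_t\scal=\Delta\scal+2|\ric|^2$, which is how one usually sees preservation of nonnegative scalar curvature along the Ricci flow.
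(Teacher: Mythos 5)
Your proof is correct and follows essentially the same route as the paper: reduce to the algebraic inequality $\tr Q(\R)\geq 0$ via the half-space/tangent-cone observation, then invoke the pointwise identity $\tr Q(\R)=c\,|\rho(\R)|^2$. The paper states the constant as $c=2$ rather than your $c=\tfrac{1}{2}$; your value is the one consistent with the paper's normalizations (as your sanity check against $\partial_t\scal=\Delta\scal+2|\ric|^2$ with $\scal=2\tr\R$ confirms), and in any case only the sign matters for the argument.
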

\begin{proof}
  The boundary of $\mathcal{C}_{\scal}$ is the hyperplane
  $\partial\mathcal{C}_{\scal}=\{\R|\tr \R=0\}$. The tangent cone at
  any $\R\in \partial\mathcal{C}_{\scal}$ 
  is actually (since $\mathcal{C}_{\scal}$ is a half space)
  $\mathcal{C}_{\scal}$ itself. 

  Thus we only need to show that for any
  $\R$ whose trace vanishes, $\tr Q(\R)\geq 0$. This is easily seen to
  be true thanks to the following formula (which is actually valid
  for any $\R$):
  \[\tr Q(\R)=2|\rho(\R)|^2\geq 0.\]
  (Recall that $\rho:S^2_B\Lambda^2\mathbb{R}^n\to
  S^2\mathbb{R}^n$ is the map which sends a curvature operator
  to its Ricci endomorphism.)
\end{proof}

\begin{prop}
  $\mathcal{C}_{PCO}=\{\R|\forall\omega\in\Lambda^2\mathbb{R}^2,\
  \ps{\R\omega}{\omega}\geq 0\}$ is a Ricci flow invariant nonnegative
  curvature cone.
\end{prop}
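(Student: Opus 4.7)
My plan is to handle the two claims—that $\mathcal{C}_{PCO}$ is a nonnegative curvature cone, and that it is Ricci flow invariant—separately. The first claim is essentially formal: $\mathcal{C}_{PCO}$ is the intersection of the closed half-spaces $\{\R\mid \ps{\R\omega}{\omega}\geq 0\}$ indexed by $\omega\in\Lambda^2\mathbb{R}^n$, hence is closed and convex. The action \eqref{eq:action} of $O(n,\mathbb{R})$ on $S^2_B\Lambda^2\mathbb{R}^n$ is conjugation through the induced representation on $\Lambda^2\mathbb{R}^n$, which preserves positive semidefiniteness, so $\mathcal{C}_{PCO}$ is $O(n,\mathbb{R})$-invariant. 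Finally, $\Id$ is positive definite and therefore lies in the interior of $\mathcal{C}_{PCO}$.

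For Ricci flow invariance, I would first identify the tangent cone at a boundary point. A point $\R\in\partial\mathcal{C}_{PCO}$ is a positive semidefinite operator with $\ke\R\neq\{0\}$, and $T_\R\mathcal{C}_{PCO}$ consists of the symmetric operators $S$ satisfying $\ps{S\omega}{\omega}\geq 0$ for every $\omega\in\ke\R$. So Ricci flow invariance reduces to the pointwise inequality
\[\ps{Q(\R)\omega}{\omega}\geq 0\quad\text{whenever } \R\geq 0 \text{ and } \omega\in\ke\R.\]
Since $\ps{\R^2\omega}{\omega}=|\R\omega|^2=0$ when $\omega\in\ke\R$, the whole contribution comes from $\R^\#$, and it suffices to show $\ps{\R^\#\omega}{\omega}\geq 0$.

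The core of the proof is therefore a direct computation of $\ps{\R^\#\omega}{\omega}$ using the formula $-\frac{1}{2}\tr(\ad_\omega\R\,\ad_\omega\R)$ and an orthonormal eigenbasis $\{\varphi_\alpha\}$ of $\R$ with eigenvalues $\lambda_\alpha\geq 0$. The essential input is that $\ad_\omega$ is antisymmetric with respect to the inner product on $\Lambda^2\mathbb{R}^n$, which follows immediately from its identification with $\mathfrak{so}(n,\mathbb{R})$. Expanding the trace in the basis $\{\varphi_\alpha\}$ and using this antisymmetry should yield
\[\ps{\R^\#\omega}{\omega}=\frac{1}{2}\sum_{\alpha,\beta}\lambda_\alpha\lambda_\beta\,\ps{\ad_\omega\varphi_\alpha}{\varphi_\beta}^2,\]
which is manifestly nonnegative. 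In fact this proves the stronger statement that $\R^\#\geq 0$ whenever $\R\geq 0$, so $Q(\R)$ itself stays in $\mathcal{C}_{PCO}$ on the boundary — much more than what is needed to lie in the tangent cone. The only real subtlety is recognizing the antisymmetry of $\ad_\omega$ at the right moment so that the minus sign in the trace formula turns every term into a square.
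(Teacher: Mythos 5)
Your proof is correct and follows essentially the same route as the paper's: identify the tangent cone at a boundary point via the kernel of $\R$, note $\ps{\R^2\omega}{\omega}=0$, and control $\ps{\R^\#\omega}{\omega}$ by expanding the trace in an eigenbasis and using the antisymmetry of $\ad_\omega$. Your version merely pushes the paper's final expression $\frac{1}{2}\sum_i\lambda_i\ps{\R[\omega,\eta_i]}{[\omega,\eta_i]}$ one expansion further into a manifest sum of squares, which also yields the (correct, standard) stronger fact that $\R^\#\geq 0$ whenever $\R\geq 0$.
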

\begin{proof}
  A nonnegative curvature operator $\R$ belongs to the boundary of
  $\mathcal{C}_{PCO}$ if and only if $\ke(\R)\neq\{0\}$. In this
  case:
  \[T_{\R} \mathcal{C}_{PCO}=\{\Lop|\forall \omega\in \ke(\R),\
  \ps{\Lop\omega}{\omega}\geq 0\}.\]

  Thus we only need to show that for any $\omega\in\ke(\R)$,
  $\ps{Q(\R)\omega}{\omega}\geq 0$. Recall that $Q(\R)=\R^2+\R^\#$. By
  the symmetry of $\R$, $\ps{\R^2\omega}{\omega}=0$, thus we only need
  to deal with $\ps{\R^\#\omega}{\omega}$. In order to do this we
  choose an orthonormal basis $\eta_i$ of $\Lambda^2\mathbb{R}^n$
  consisting of eigenvectors $\R$ with associated eigenvalues
  $\lambda_i\geq 0$. We compute:
  \begin{align*}
    \ps{\R^\#\omega}{\omega}&=-\frac{1}{2}\tr (\ad_\omega\circ\R\circ
    \ad_\omega\circ\R)\\
    &=-\frac{1}{2}\sum_i\ps{\left[\omega,\R[\omega,\R\eta_i]\right]}{\eta_i}\\
    &=\frac{1}{2}\sum_i\ps{\R[\omega,\R\eta_i]}{[\omega,\eta_i]}\\
    &=\frac{1}{2}\sum_i\lambda_i\ps{\R[\omega,\eta_i]}{[\omega,\eta_i]}\\
    &\geq 0
  \end{align*}
  since $\R$ is nonnegative.
\end{proof}

Let us now briefly discuss the status of the most important curvature
cones in Riemannian geometry: the cone of operators with nonnegative
sectional curvature and the cone of curvature operators with
nonnegative Ricci curvature. They are Ricci flow invariant in dimension
3. However starting with dimension 4 these cone are not Ricci flow invariant. 

For the cone $\mathcal{C}_{\ric}$ of curvature operators with nonnegative Ricci curvature,
an even stronger result is 
actually available: there exists compact Kaehler surfaces $(M^4,g_0)$
with nonnegative Ricci curvature whose Ricci flow $(g(t))_{t>0}$
has negative Ricci curvature in some directions. This has been proven
by Maximo in \cite{zbMATH05863854}.

For the cone $\mathcal{C}_{\sec}$ of curvature operators with
nonnegative sectional curvature, it is quite easy to find an explicit
point in the boundary of $\mathcal{C}_{\sec}$ where $Q$ doesn't point
inside $\mathcal{C}_{\sec}$. 

Let $\R$ be the curvature operator of
$\mathbb{CP}^n$ ($n\geq 2$) normalized to have sectional curvature
between $1$ and $4$. Then $\tilde{\R}=\R-\Id\in\partial
\mathcal{C}_{\sec}$. Let $\Pi$ be a plane in $\mathbb{C}^n$ whose
sectional curvature is $1$. Let $(e_1,e_2)$ be a (real) basis of $\Pi$ (note
that $\mathbb{C}e_1$ and $\mathbb{C}e_2$ must be orthogonal in order
to ensure that the sectional curvature of $\Pi$ for $\R$ is $1$). The
sectional of $\Pi$ for $\tilde{\R}$ is $0$. Thus, if $Q(\tilde{\R})$
was to point inside $\mathcal{C}_{\sec}$, the sectional curvature of
$\Pi$ for $Q(\tilde{\R})$ would be nonnegative. But a quick computation
shows that $Q(\tilde{\R})=2(n+1)\R-(2n+5)\Id$. Hence the sectional
curvature of $\Pi$ with respect to $Q(\tilde{\R})$ is actually
$-3$. This shows that $\mathcal{C}_{\sec}$ is not Ricci flow invariant
starting in dimension $4$.

Thus starting in dimension 4, we need to consider more exotic
curvature cones. We will here define the most important of these Ricci
flow invariant curvature cones.

We start with the cone of $2$-nonnegative curvature operators:
\begin{dfn}
  $\mathcal{C}_{2PCO}$ is the cone of $2$-nonnegative curvature
  operators, more precisely it consists of all curvature operators
  whose two lowest eigenvalues have positive sum.
\end{dfn}

The other cones are derived from the ``positive isotropic curvature'' (PIC)
condition, introduced by Micallef and Moore in \cite{zbMATH04080313} as an obstruction to the
existence of area minimizing two spheres in compact Riemannian manifolds. 

We extend any curvature operator $\R$ to $\Lambda^2\mathbb{C}^n$ in a
complex linear way. We also extend the inner products on
$\mathbb{R}^n$ and $\Lambda^2\mathbb{R}^n$ in a complex bilinear
way to $\mathbb{C}^n$ and $\Lambda^2\mathbb{C}^n$. The resulting complex symmetric bilinear forms will still be
denoted by $\ps{\ }{\ }$. Note that these symmetric complex bilinear
forms admit isotropic vectors. A subspace $V\subset\mathbb{C}^n$ is
said to be totally isotropic if every very vector $v\in V$ is
isotropic (satisfies $\ps{v}{v}=0$).

The complex sectional curvature of a \emph{complex} plane $\Pi$
in $\mathbb{C}^n$ is defined by:
\[K^\mathbb{C}(\Pi)=\ps{\R(u\wedge v)}{\bar{u}\wedge\bar{v}}\]
where $(u,v)$ is a basis of $\Pi$ which is orthonormal basis with
respect to the hermitian 
inner product on $\mathbb{C}^n$ induced by $\ps{\ }{\ }$ and complex
conjugation. 

\begin{dfn}
  A curvature operator $\R$ is said to have nonnegative isotropic curvature
  (in short: is $\overline{PIC}$) if $K^\mathbb{C}(\Pi)\geq 0$ for
  every totally isotropic complex plane $\Pi$. The cone of
  $\overline{PIC}$ operators is denoted by $\mathcal{C}_{IC}$.
\end{dfn}
\begin{rem}
  The following characterization of PIC curvature operators is useful:
  $\R$ is $\overline{PIC}$ if and only if for every orthonormal
  $4$-frame $(e_1,e_2,e_3,e_4)$:
  \[\R_{1313}+\R_{1414}+\R_{2323}+\R_{2424}-2\R_{1234}\geq 0 \]
  where $R_{ijkl}$ stands for $\ps{\R(e_i\wedge e_j)}{e_k\wedge e_l}$.
\end{rem}

\begin{dfn}
  A curvature operator $\R:\Lambda^2\mathbb{R}^n\to \Lambda^2\mathbb{R}^n$ is
  $\overline{PIC1}$ if 
  its natural extension $\tilde{\R}:
  \Lambda^2\mathbb{R}^{n+1}\to \Lambda^2\mathbb{R}^{n+1}$ is
  $\overline{PIC}$. The cone of 
  $\overline{PIC1}$ operators is denoted by $\mathcal{C}_{IC1}$.
\end{dfn}
\begin{dfn}
  A curvature operator $\R:\Lambda^2\mathbb{R}^n\to \Lambda^2\mathbb{R}^n$ is
  $\overline{PIC2}$ if 
  its natural extension $\tilde{\R}:
  \Lambda^2\mathbb{R}^{n+2}\to \Lambda^2\mathbb{R}^{n+2}$ is
  $\overline{PIC}$. The cone of 
  $\overline{PIC2}$ operators is denoted by $\mathcal{C}_{IC2}$.
\end{dfn}
One obviously has that
$\mathcal{C}_{IC2}\subset\mathcal{C}_{IC1}\subset\mathcal{C}_{IC}$. 

\begin{thm}
  The nonnegative curvature cones $\mathcal{C}_{2PCO}$,
  $\mathcal{C}_{IC}$, $\mathcal{C}_{IC1}$ and $\mathcal{C}_{IC2}$ are
  Ricci flow invariant.
\end{thm}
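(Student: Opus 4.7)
The plan is to verify in each case the tangent-cone condition $Q(\R) \in T_\R \mathcal{C}$ at every $\R \in \partial \mathcal{C}$, and to reduce the two variants of PIC to the basic PIC case once the latter has been handled.

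For $\mathcal{C}_{2PCO}$, I would describe $\partial \mathcal{C}_{2PCO}$ as consisting of operators whose two smallest eigenvalues $\lambda_1 \leq \lambda_2$, with orthonormal eigenvectors $\omega_1, \omega_2$, satisfy $\lambda_1 + \lambda_2 = 0$, so that $T_\R \mathcal{C}_{2PCO}$ contains every $\Lop$ with $\ps{\Lop\omega_1}{\omega_1} + \ps{\Lop\omega_2}{\omega_2} \geq 0$. Applied to $\Lop = Q(\R)$, the $\R^2$ contribution is $\lambda_1^2 + \lambda_2^2 \geq 0$, while the $\R^\#$ contribution can be expanded in an eigenbasis of $\R$ as in the $\mathcal{C}_{PCO}$ calculation. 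The bookkeeping beyond PCO uses the $\ad$-invariance of the inner product on $\Lambda^2\mathbb{R}^n \cong \mathfrak{so}(n)$, which makes the structure constants totally antisymmetric and lets one pair the terms to produce coefficients of the form $\lambda_i + \lambda_j$; all of these are nonnegative by the minimality of $\lambda_1 + \lambda_2$.

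The heart of the theorem is $\mathcal{C}_{IC}$, due to Brendle--Schoen and independently Nguyen. At $\R \in \partial \mathcal{C}_{IC}$ the reformulation in the remark supplies an orthonormal $4$-frame $(e_1, e_2, e_3, e_4)$ on which the PIC quantity vanishes and is therefore minimized. First and second variations of this expression under infinitesimal rotations of the frame produce linear identities and quadratic inequalities among the components $\R_{ijkl}$. The task is then to expand $Q(\R)_{1313} + Q(\R)_{1414} + Q(\R)_{2323} + Q(\R)_{2424} - 2Q(\R)_{1234}$ using the definitions of $\R^2$ and $\R^\#$ and rewrite the result as a manifestly nonnegative combination of the second-variation inequalities together with PIC inequalities applied to auxiliary $4$-frames built from $(e_1, e_2, e_3, e_4)$ by rotations and index swaps. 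Assembling this algebraic identity is the genuinely hard step; everything else in the proof is formal.

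For $\mathcal{C}_{IC1}$ and $\mathcal{C}_{IC2}$ I would use the product trick: the natural extension $\tilde{\R}$ of $\R$ to $\mathbb{R}^{n+k}$ is exactly the curvature operator of the Riemannian product $M \times \mathbb{R}^k$, and at the ODE level the extension commutes with $Q$, i.e.\ $Q(\tilde{\R}) = \widetilde{Q(\R)}$, a compatibility one verifies directly from the definitions of $\R^2$ and $\R^\#$. By definition $\R$ lies in $\partial \mathcal{C}_{IC1}$ (resp.\ $\partial \mathcal{C}_{IC2}$) exactly when $\tilde{\R}$ lies in $\partial \mathcal{C}_{IC}$ in dimension $n+1$ (resp.\ $n+2$), so the PIC invariance proved above forces $Q(\tilde{\R})$ into $T_{\tilde{\R}} \mathcal{C}_{IC}$, and the same correspondence then yields $Q(\R) \in T_\R \mathcal{C}_{IC1}$ (resp.\ $T_\R \mathcal{C}_{IC2}$). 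In this way all three PIC-type invariances are reduced to the single hard computation carried out in the $\mathcal{C}_{IC}$ step.
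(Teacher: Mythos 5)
Your route is the historical, case-by-case one (Chen's eigenvalue argument for $\mathcal{C}_{2PCO}$, the Brendle--Schoen/Nguyen frame-variation argument for $\mathcal{C}_{IC}$, and the product trick for $\mathcal{C}_{IC1}$ and $\mathcal{C}_{IC2}$), and it does lead to the theorem; but the paper deliberately takes a different path. It only records these attributions and then proves all four invariances at once via Wilking's construction: each cone is a Wilking cone $\mathcal{C}(S)=\{\R \mid \ps{\R\omega}{\bar{\omega}}\geq 0 \text{ for all } \omega\in S\}$ for a suitable $SO(n,\mathbb{C})$-invariant $S\subset\mathfrak{so}(n,\mathbb{C})$, namely $S=\{\omega^2=0\}$ for $\mathcal{C}_{2PCO}$, $S=\{\omega^2=0,\ \rg\omega=2\}$ for $\mathcal{C}_{IC}$, $S=\{\omega^3=0,\ \rg\omega=2\}$ for $\mathcal{C}_{IC1}$ and $S=\{\rg\omega=2\}$ for $\mathcal{C}_{IC2}$. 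For any such $S$, the second variation of $t\mapsto\ps{\R\Ad_{e^{t\eta}}\omega}{\overline{\Ad_{e^{t\eta}}\omega}}$ at a minimizing $\omega$ shows that $\Lop=-\ad_{\omega}\circ\R\circ\ad_{\bar{\omega}}$ is nonnegative and that $\R$ is nonnegative on the image of $\ad_\omega$, whence $\ps{\R^{\#}\omega}{\bar{\omega}}=-\tfrac12\tr(\R\Lop)\geq 0$ in a few lines. What this buys is precisely that the step you call ``genuinely hard'' never has to be carried out, and that $\mathcal{C}_{IC1}$, $\mathcal{C}_{IC2}$ come directly rather than through the compatibility $Q(\tilde{\R})=\widetilde{Q(\R)}$ (which is nonetheless correct and is exactly the reduction used in the original references).

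The substantive gap in your write-up is that, for $\mathcal{C}_{IC}$, you describe the shape of the Brendle--Schoen/Nguyen proof but leave the decisive algebraic identity -- expressing the isotropic curvature of $Q(\R)$ on a null frame as a nonnegative combination of second-variation inequalities and PIC inequalities on auxiliary frames -- entirely unassembled; that identity \emph{is} the theorem in this approach, and since your $\mathcal{C}_{IC1}$ and $\mathcal{C}_{IC2}$ cases reduce to it, three of the four invariances rest on an asserted computation. Two smaller points on $\mathcal{C}_{2PCO}$: the tangent cone at a boundary point with degenerate lowest eigenvalues is the intersection of the half-spaces over \emph{all} minimizing orthonormal pairs $(\omega_1,\omega_2)$, not the single half-space you state (your pairing argument applies verbatim to each such pair, so nothing breaks, but the containment goes the other way); and you should make explicit that on the boundary $\lambda_2\geq 0$, so that after the totally antisymmetric structure constants cancel the mixed terms into multiples of $\lambda_1+\lambda_2=0$, the residual terms $\lambda_i\lambda_j$ with $i,j\geq 2$ are nonnegative -- this is exactly where $2$-nonnegativity enters.
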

The invariance of $\mathcal{C}_{2PCO}$ is due to Chen (\cite{zbMATH00030584}). The invariance
of $\mathcal{C}_{IC}$ is due to Brendle and Schoen
(\cite{zbMATH05859406}) and, independently, 
Nguyen (\cite{zbMATH05681042}). The invariance of $\mathcal{C}_{IC1}$ and
$\mathcal{C}_{IC2}$ follows immediately from the invariance of
$\mathcal{C}_{IC}$ and the fact that the ODE $\R'=Q(\R)$ respects
product structures. It was first pointed out in \cite{zbMATH05859406}.

We will see in the next section a unified proof of the invariance of
these four cones, due to Wilking.

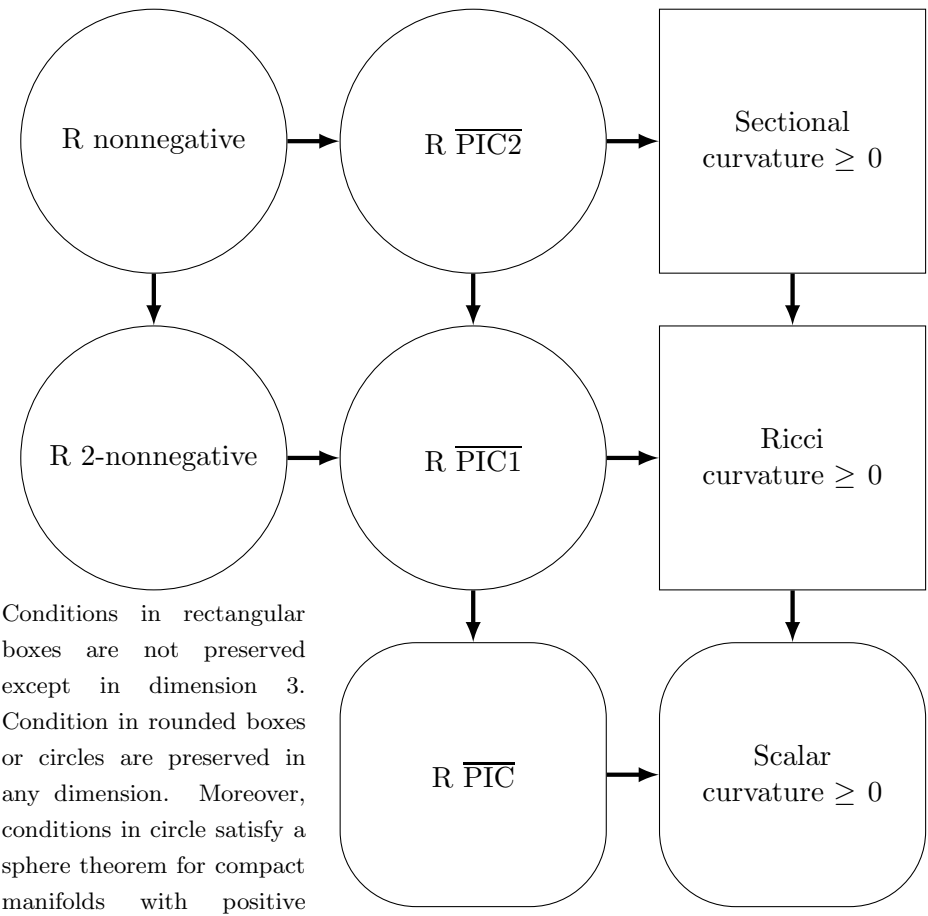
\begin{figure}
  \centering
    \tikzstyle{cond_non_pre}=[rectangle, draw, minimum size=3.5cm,text
    width=3cm, align=center]
    \tikzstyle{cond_pre}=[rectangle, rounded corners=1cm, draw,
    minimum size=3.5cm,text width=3cm, align=center]
    \tikzstyle{cond_pre_sph}=[circle, draw, minimum size=3.5cm, text
    width=3cm, align=center] 
    \tikzstyle{fleche}=[->,>=latex,ultra thick]
  \begin{tikzpicture}[node distance=4.2cm,scale=0.4]
    \node[cond_pre_sph] (PCO) {$\R$ nonnegative};
    \node[cond_pre_sph] (PIC2) [right of=PCO] {$\R$ \textoverline{PIC2}};
    \node[cond_pre_sph] (PCO2) [below of=PCO] {$\R$ 2-nonnegative};
    \node[cond_pre_sph] (PIC1) [below of=PIC2] {$\R$ \textoverline{PIC1}};
    \node[cond_non_pre] (SEC) [right of=PIC2] {Sectional curvature $\geq 0$};
    \node[cond_non_pre] (RIC) [right of=PIC1] {\nohyphens{Ricci curvature $\geq 0$}};
    \node[cond_pre] (PIC) [below of=PIC1] {$\R$ \textoverline{PIC}};
    \node[cond_pre] (SCAL) [right of=PIC] {\nohyphens{Scalar curvature $\geq
      0$}}; 
    \node[text width=4cm, align=justify] (LEG) [below of=PCO2]
    {{\footnotesize \nohyphens{Conditions in rectangular boxes are not preserved
        except in dimension 3.
      Condition in rounded boxes or circles are preserved in any
      dimension.  Moreover, conditions in circle satisfy a sphere
      theorem for compact manifolds with positive $\mathcal{C}$-curvature.}}};
    \draw[fleche] (PCO) to (PCO2);
    \draw[fleche] (PCO) to (PIC2);
    \draw[fleche] (PIC2) to (SEC);
    \draw[fleche] (PIC2) to (PIC1);
    \draw[fleche] (SEC) to (RIC);
    \draw[fleche] (PCO2) to (PIC1);
    \draw[fleche] (PIC1) to (RIC);
    \draw[fleche] (PIC1) to (PIC);
    \draw[fleche] (RIC) to (SCAL);
    \draw[fleche] (PIC) to (SCAL);
  \end{tikzpicture}
  \caption{Behavior of classical curvature conditions under Ricci flow. }
  \label{fig:implic_courbures}
\end{figure}

\subsection{Wilking's construction}
\label{sec:wilk-constr}
We now describe a construction due to Wilking which recovers most of the Ricci
flow invariant curvature conditions in one relatively easy proof. This
construction was published in \cite{zbMATH06185842}.

Before stating the criterion, recall that $\Lambda^2\mathbb{R}^n$ is
naturally isomorphic to $\mathfrak{so}(n,\mathbb{R})$. And not the
action of $SO(n,\mathbb{R})$ on $\Lambda^2\mathbb{R}^n$ is actually
just the adjoint action of $SO(n,\mathbb{R})$ on its Lie algebra
$\mathfrak{so}(n,\mathbb{R})$, thus we will denote the action an
element $g\in SO(n,\mathbb{R})$ on
$\omega\in\mathfrak{so}(n,\mathbb{R})\simeq\Lambda^2\mathbb{R}^n$ by 
$\Ad_g\omega$, moreover we will denote the Lie bracket
$[\omega,\eta]=\ad_\omega\eta$. 

We will also consider the action of
the complex Lie group $SO(n,\mathbb{C})$ on
$\mathfrak{so}(n,\mathbb{C})\simeq\Lambda^2\mathbb{C}^n$. 

\begin{dfn}
  Let $S$ be a subset of the complex Lie algebra $\mathfrak{so}
  (n,\mathbb{C})\simeq\Lambda^2\mathbb{C}^n$ which
  is invariant under the action of $SO(n,\mathbb{C})$. The nonnegative
  curvature cone 
  \[\mathcal{C}(S)= \left\{ \R \in S_B^2\Lambda^2\mathbb{R}^n\ \middle\vert
    \ \ps{\R(\omega)}{\bar{\omega}}\ge 0 \ \text{ for  all}\ \omega \in S \right\}\] 
  is called the \emph{Wilking Cone} associated with $S$.
\end{dfn}

\begin{thm}[\cite{zbMATH06185842}]
  Any Wilking cone $\mathcal{C}(S)$ is Ricci flow invariant.
\end{thm}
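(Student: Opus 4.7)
The plan is to apply the just-stated maximum-principle criterion: it suffices to show that at each $\R\in\partial\mathcal{C}(S)$, the vector $Q(\R)$ lies in the tangent cone $T_\R\mathcal{C}(S)$. Because $\mathcal{C}(S)$ is an intersection of half-spaces $\{\R:\ps{\R\omega}{\bar\omega}\ge 0\}$ indexed by $\omega\in S$, boundary operators are those admitting some ``zero direction'' $\omega\in S$ with $\ps{\R\omega}{\bar\omega}=0$, and $T_\R\mathcal{C}(S)$ consists of $\Lop$ satisfying $\ps{\Lop\omega}{\bar\omega}\ge 0$ for every such $\omega$. The entire problem therefore reduces to proving
\[
\ps{Q(\R)\omega}{\bar\omega}\ge 0\qquad\text{whenever }\omega\in S,\ \ps{\R\omega}{\bar\omega}=0.
\]

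I split $Q(\R)=\R^2+\R^\#$. The first piece is free: since $\R$ is the complex-linear extension of a real symmetric operator, $\R\bar\omega=\overline{\R\omega}$ and $\ps{\R^2\omega}{\bar\omega}=\ps{\R\omega}{\overline{\R\omega}}$ is the squared Hermitian norm of $\R\omega$, hence nonnegative. So the real content is to control $\ps{\R^\#\omega}{\bar\omega}$, and this is where the $SO(n,\mathbb{C})$-invariance of $S$ enters.

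For every $\xi\in\mathfrak{so}(n,\mathbb{C})$ the curve $\omega_t:=\Ad_{\exp(t\xi)}\omega$ lies in $S$ for real $t$, so the real-valued function $f_\xi(t):=\ps{\R\omega_t}{\bar\omega_t}$ is nonnegative with $f_\xi(0)=0$. The first-variation condition $f'_\xi(0)=0$ for every $\xi$, combined with ad-invariance of the bilinear form on $\mathfrak{so}(n,\mathbb{C})$, collapses to the algebraic identity $[\omega,\R\bar\omega]=0$. Combining $f''_\xi(0)\ge 0$ with $f''_{i\xi}(0)\ge 0$ (which is legal because $\xi$ ranges over the complex Lie algebra) yields
\[
\ps{\R[\xi,\omega]}{\overline{[\xi,\omega]}}\ge 0\qquad\text{for every }\xi\in\mathfrak{so}(n,\mathbb{C}),
\]
i.e.\ the Hermitian sesquilinear form $h(\xi,\eta):=\ps{\R[\xi,\omega]}{\overline{[\eta,\omega]}}$ on $\mathfrak{so}(n,\mathbb{C})$ is positive semidefinite. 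The final step is to extract $\ps{\R^\#\omega}{\bar\omega}\ge 0$ from this PSD condition by matching the trace formula $\ps{\R^\#\omega}{\bar\omega}=-\tfrac12\tr(\ad_\omega\R\,\ad_{\bar\omega}\R)$ against a suitably weighted combination of diagonal entries of $h$, with all discrepancies killed by the constraints $\ps{\R\omega}{\bar\omega}=0$ and $[\omega,\R\bar\omega]=0$ and by Jacobi rearrangements on $\mathfrak{so}(n,\mathbb{C})$.

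The main obstacle is this final identification: a naive unweighted sum of diagonal entries of $h$ over an orthonormal basis of $\mathfrak{so}(n,\mathbb{R})$ does not directly reconstruct $\ps{\R^\#\omega}{\bar\omega}$, so one must choose the basis and the weights carefully, using a Casimir-type identity on $\mathfrak{so}(n,\mathbb{C})$ to absorb residual second-order terms into multiples of $\ps{\R\omega}{\bar\omega}=0$. Organizing this algebra so that it works \emph{uniformly} for every $SO(n,\mathbb{C})$-invariant subset $S$ is the core of Wilking's argument and the reason it handles the four cones $\mathcal{C}_{2PCO}$, $\mathcal{C}_{IC}$, $\mathcal{C}_{IC1}$, $\mathcal{C}_{IC2}$ in one stroke.
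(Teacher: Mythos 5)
Your reduction to $\ps{Q(\R)\omega}{\bar\omega}\ge 0$ at zero directions $\omega\in S$, the handling of $\R^2$, and the second-variation argument along $t\mapsto\Ad_{\exp(t\xi)}\omega$ (including the trick of combining $\xi$ and $i\xi$ to isolate the mixed term) reproduce the paper's proof exactly up to the inequality $\ps{\R[\xi,\omega]}{\overline{[\xi,\omega]}}\ge 0$ for all $\xi\in\mathfrak{so}(n,\mathbb{C})$. But the final step --- the only place where $\R^\#$ actually appears --- is not carried out: you describe a plan involving ``suitably weighted combinations of diagonal entries of $h$'' and a ``Casimir-type identity,'' and you flag it yourself as the main obstacle. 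As written this is a genuine gap, and moreover the plan points in the wrong direction. Your observation that summing $h(\eta_i,\eta_i)$ over an orthonormal basis does not reconstruct $\ps{\R^\#\omega}{\bar\omega}$ is correct (that sum computes $-\tr(\ad_{\bar\omega}\circ\R\circ\ad_\omega)$, which is missing one factor of $\R$), but no weighting or Casimir identity fixes this; the constraints $\ps{\R\omega}{\bar\omega}=0$ and $[\omega,\R\bar\omega]=0$ play no role in the conclusion.

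The correct finish, which is where Wilking's argument actually closes, is a trace argument on the \emph{composite operator}. By definition (polarized to the Hermitian setting), $\ps{\R^\#\omega}{\bar\omega}=-\tfrac12\tr\bigl(\ad_\omega\circ\R\circ\ad_{\bar\omega}\circ\R\bigr)=\pm\tfrac12\tr(\Lop\R)$ with $\Lop=-\ad_\omega\circ\R\circ\ad_{\bar\omega}$. Your inequality $\ps{\R[\xi,\omega]}{\overline{[\xi,\omega]}}\ge 0$ says precisely two things: first, $\Lop$ (equivalently its conjugate $-\ad_{\bar\omega}\circ\R\circ\ad_\omega$) is a positive semidefinite Hermitian operator; second, $\R$ is positive semidefinite as a Hermitian form on the image of $\ad_{\bar\omega}$, which contains the image of $\Lop$. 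Now diagonalize $\Lop=\sum_i\mu_i\,v_iv_i^*$ with $\mu_i\ge 0$ and each $v_i$ with $\mu_i>0$ lying in $\mathrm{im}(\Lop)\subset\mathrm{im}(\ad_{\bar\omega})$; then $\tr(\R\Lop)=\sum_i\mu_i\ps{\R v_i}{\bar v_i}\ge 0$, with the signs arranged so that this is exactly $\ps{\R^\#\omega}{\bar\omega}\ge 0$. In other words, the basis you need is an eigenbasis of $\Lop$ itself, not a weighted orthonormal basis of the Lie algebra, and the two positivity statements you already derived are sufficient with no further algebra. I recommend you replace the last paragraph of your argument with this trace computation.
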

\begin{proof}[Sketch of proof:]
  Let $\R\in\partial\mathcal{C}(S)$, and let $\omega$ be any element
  of $S$ such that $\ps{\R\omega}{\bar{\omega}}$. We need to show that
  $\ps{Q(\R)\omega}{\bar{\omega}}\geq 0$. We obviously have that
  $\ps{\R^2\omega}{\bar{\omega}}=\ps{\R\omega}{\overline{\R\omega}}\geq
  0$. 

  We now show that $\ps{\R^\#\omega}{\bar{\omega}}\geq 0$. Let $\eta$
  be any element of $\mathfrak{so}(n,\mathbb{C})$, and consider the
  function:
  \[t\mapsto\ps{\R\Ad_{e^{t\eta}}\omega}{\overline{\Ad_{e^{t\eta}}\omega}}.\]
  It is nonnegative and attain its minimum at $t=0$. Differentiating
  twice and evaluating at $t=0$, we get that:
  \[\ps{\R(\ad_\eta\ad_\eta\omega}{\bar{\omega}}
  +2\ps{\R\ad_\eta\omega}{\ad_{\bar{\eta}}\bar{\omega}}
  +\ps{\R\omega}{\ad_{\bar{\eta}}\ad_{\bar{\eta}}\bar{\omega}}\geq 0. \]
  Replacing $\eta$ by $i\eta$ and summing we get that:
  \[\ps{\R\ad_\eta\omega}{\ad_{\bar{\eta}}\bar{\omega}}\geq 0.\]
  Hence:
  \[\ps{\R\ad_\omega\eta}{\ad_{\bar{\omega}}\bar{\eta}}\geq 0.\]
  This shows two things. First the Hermitian operator
  $-\ad_{\bar{\omega}}\circ\R\circ\ad_\omega$ and its conjugate
  $\Lop=-\ad_{\omega}\circ\R\circ\ad_{\bar{\omega}}$ are
  nonnegative. Second, $\R$ is nonnegative as an Hermitian operator on
  the image of $\ad_\omega$ (which contains the image of $\Lop$). 
  
  These two things together imply that:
  \[\ps{\R^\#\omega}{\bar{\omega}}=-\frac{1}{2}\tr
  (\Lop\R)=-\frac{1}{2}\tr (\R\Lop)\geq 0.\]
\end{proof}

Let us see now how this can be used to recover the Ricci flow
invariance of various curvature cones (see \cite{zbMATH06185842} for
more details):
\begin{itemize}
\item Choosing $S$ to be the whole $\mathfrak{so}(n,\mathbb{C})$, we
  recover the invariance of $\mathcal{C}_{PCO}$.
\item If we let $S=\{\omega\in
  \mathfrak{so}(n,\mathbb{C})\ |\ \omega^2=0\}$, we get the invariance of $\mathcal{C}_{2PCO}$.
\item With $S=\{\omega\in
  \mathfrak{so}(n,\mathbb{C})\ |\ \omega^2=0,\ \rg\omega=2\}$, we have the
  invariance of $\mathcal{C}_{IC}$. 
\item Letting $S=\{\omega\in
  \mathfrak{so}(n,\mathbb{C})\ |\ \omega^3=0,\ \rg\omega=2\}$, we directly get the
  invariance of $\mathcal{C}_{IC1}$.
\item Finally, with $S=\{\omega\in
  \mathfrak{so}(n,\mathbb{C})\ |\  \rg\omega=2\}$, we obtain the
  invariance of $\mathcal{C}_{IC2}$.
\end{itemize}

\subsection{One parameter families and differentiable sphere theorems}
\label{sec:one-param-famil}

Following the seminal work of Hamilton, one of the great successes of
Ricci flow has been the proof of ``differentiable sphere
theorems'' under various positive curvature assumptions. 
All these theorems are of the following form: \emph{``Let $(M,g_0)$ be a compact manifold
with positive $\mathcal{C}$-curvature, then $M$ admits a constant
sectional curvature metric and is thus diffeomorphic to a spherical
space form''}.

The proof of this kind of theorem using Ricci flow has a mandatory step,
the construction of a so called pinching set:
\begin{dfn}
  A closed convex $O(n)$-invariant $F\subset
  S^2_B\Lambda^2\mathbb{R}^n$ is a called  a pinching set if:
  \begin{itemize}
  \item It is Ricci flow invariant.
  \item As $\lambda$ goes to $0$, ${\lambda}F$ converges\footnote{By
      this we mean that for every compact $K\subset
      S^2_B\Lambda^2\mathbb{R}^n$, $(\lambda F)\cap K$ converges in
      the Hausdorff topology to $(\mathbb{R}_+)\Id\cap K$.}   
  to $\mathbb{R}_+\Id$.
  \end{itemize}
\end{dfn}
\begin{rem}
  A pinching set has to be a subset of the half space $\{\R|\tr\R> 0\}$.
\end{rem}

\begin{dfn}
  A curvature cone $\mathcal{C}$ is said to have the pinching property
  if for any compact $K$ contained in the \emph{interior} of
  $\mathcal{C}$, there is a pinching set $F$ containing $K$.
\end{dfn}
\definecolor{light-gray}{gray}{0.90}
\begin{figure}
  \centering
  \begin{tikzpicture}
    \node at (0,6) [above] {$\mathbb{R}_+\Id$};
    \clip (-4,6) rectangle (4,0);
    \draw (-4,0) -- (4,0);
    \draw[->] (0,0) -- (0,6);
    \draw[dashed] (-4,5) -- (0,0) -- (4,5); 
    \node at (-3,3) {$\mathcal{C}$};
    \draw[very thick,domain=-3:3] plot (\x,{0.5+\x*\x});
    \node at (1.3,3) {$F$};
    \draw[fill=light-gray] (0,2) circle (1);
    \node at (0.3,2) {$K$};    
  \end{tikzpicture}
  \caption{A pinching set $F$ containing a compact $K$ inside a cone $\mathcal{C}$ with the pinching property.}
  \label{fig:pinching_set}
\end{figure}
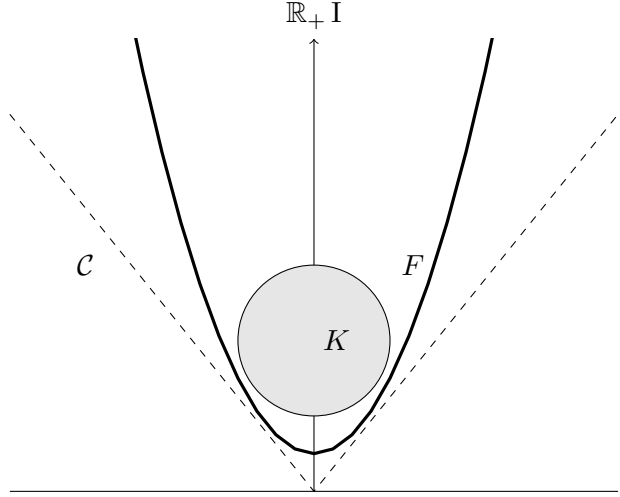
The interest of these definitions lies in the following proposition:
\begin{prop}[\cite{zbMATH04022085},\cite{zbMATH05578712}]
  Let $\mathcal{C}$ be curvature cone with the pinching property, then
  any compact manifold with positive $\mathcal{C}$-curvature
  admits a metric with constant positive sectionnal curvature.
\end{prop}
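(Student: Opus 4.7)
The plan is a classical Hamilton-style pinching argument: run the Ricci flow from $g_0$, use the pinching set furnished by $\mathcal{C}$ together with Hamilton's maximum principle to trap the curvature in a family of convex sets that collapse onto the ray $\mathbb{R}_+\Id$, and then use a parabolic blow-up at the singular time to extract a limit of constant positive sectional curvature.

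First, let $(g(t))_{t\in[0,T)}$ be the maximal Ricci flow with $g(0)=g_0$. By compactness of $M$ and continuity of $p\mapsto \R_{g_0}(p)$, the curvature data at time $0$ take values in a compact set $K\subset S_B^2\Lambda^2\mathbb{R}^n$ (read off in local orthonormal frames, which is well-defined thanks to the $O(n)$-invariance of the cones). Since every $\R_{g_0}(p)$ lies in the interior of $\mathcal{C}$ and $M$ is compact, $K$ is contained in the interior of $\mathcal{C}$. By the pinching property, there is a pinching set $F\supset K$. Since $F$ is Ricci flow invariant, Hamilton's maximum principle (quoted earlier in the text) gives $\R_{g(t)}(p)\in F$ for all $t\in[0,T)$ and all $p\in M$. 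Because $F\subset\{\tr\R>0\}$, the scalar curvature satisfies a uniform positive lower bound at $t=0$, and the standard inequality $\partial_t \scal \ge \Delta \scal + \tfrac{2}{n}\scal^2$ forces $T<\infty$ and $\sup_{M\times[0,T)}|\R_{g(t)}|=\infty$.

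Next, perform a blow-up. Choose $(p_k,t_k)\in M\times[0,T)$ with $t_k\to T$ and $Q_k:=|\R_{g(t_k)}(p_k)|$ realising the spacetime maximum up to time $t_k$, and consider the parabolically rescaled flows
\[
\tilde g_k(s) \;=\; Q_k\, g\!\left(t_k+\tfrac{s}{Q_k}\right),
\]
whose curvature operators $\tilde\R_k$ take values in $Q_k^{-1}F$. Since $Q_k\to\infty$, the Hausdorff convergence $\lambda F\to\mathbb{R}_+\Id$ as $\lambda\to 0$ shows that on any fixed compact set of $S_B^2\Lambda^2\mathbb{R}^n$, the values of $\tilde\R_k$ are forced arbitrarily close to $\mathbb{R}_+\Id$. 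Thus the rescaled flows have uniformly bounded, almost isotropic curvature on larger and larger parabolic neighbourhoods of $(p_k,0)$.

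The main obstacle is converting this pointwise pinching into a smooth Riemannian conclusion. Here one combines Shi's derivative estimates with Perelman's no local collapsing theorem to obtain a uniform injectivity radius bound at the basepoints $p_k$, and then applies Hamilton's compactness theorem to extract a smooth Cheeger--Gromov limit $(M_\infty,\tilde g_\infty(s),p_\infty)$ of a subsequence of $(M,\tilde g_k(s),p_k)$. At every point and every time, $\R_{\tilde g_\infty}$ lies in $\bigcap_{\lambda>0}\lambda F=\mathbb{R}_+\Id$, so $\tilde g_\infty$ has constant positive sectional curvature at each point; by Schur's lemma (or directly from $\R_\infty=c\,\Id$ with $c$ a function constrained by the second Bianchi identity) this constant is global, and Myers' theorem makes $M_\infty$ compact. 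Finally, the smooth convergence together with compactness of $M$ shows $M\cong M_\infty$ diffeomorphically, so the pulled-back metric equips $M$ with a Riemannian metric of constant positive sectional curvature, completing the proof.
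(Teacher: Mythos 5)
Your argument is correct and follows essentially the same route as the paper's own sketch: trap the initial curvature in a compact $K$ in the interior of $\mathcal{C}$, use the pinching set $F\supset K$ and Hamilton's maximum principle to confine $\R_{g(t)}$ to $F$, blow up at the finite singular time, invoke Perelman's no local collapsing to extract a limit flow whose curvature lies in $\bigcap_{\lambda>0}\lambda F=\mathbb{R}_+\Id$, and conclude constant positive curvature via Schur and compactness via Myers. The extra details you supply (the scalar curvature evolution inequality forcing $T<\infty$, Shi's estimates and Hamilton's compactness theorem) are exactly the standard ingredients the paper leaves implicit.
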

\begin{proof}[Sketch of proof:]
  Let $(M,g_0)$ be a compact manifold with positive
  $\mathcal{C}$-curvature. By compactness of $M$, there exist a
  compact $K$ contained in the interior of $\mathcal{C}$ such that
  $R_{g_0}(x)\in K$ for all $x\in M$. 

  Since $g_0$ has positive scalar curvature, the Ricci flow $g(t)$ starting
  at $g_0$ is defined only on some interval $[0,T)$ where $T<\infty$
  and the curvature of $g(t)$ blows up as $t\to T$. We consider a
  sequence of times $t_i<T$ converging to $T$, and points $x_i\in M$ where
  the maximum of the norm of the curvature tensor is attained. We let
  $C_i=\left |\R_{g(t_i)}\left(x_i\right)\right|$ and consider the
  sequence of parabolic blow ups centered at $x_i$:
  \[g_i(t)=C_ig_i\left(t_i+\frac{t}{C_i}\right).\]
  Perelman's no local collapsing theorem ensures that this sequence of
  Ricci flow converges, up to a subsequence, to some limiting Ricci
  flow $(M_\infty,g_\infty(t))$ defined on some time interval
  containing $0$.

  Let $F$ be a pinching set containing $K$, since $F$ is Ricci flow
  invariant, the curvature operator of
  of $g(t)$ satisfies $\R_{g(t)}(x)\in F$ for every $x\in M$ and $t\in
  [0,T)$, thus the curvature operator of $g_i(t)$ satisfies:
  \[\R_{g_i(t)}=\frac{1}{C_i}\R_{g{\left(t_i+\tfrac{t}{K_i}\right)}}\in\frac{1}{C_i}
  F.\]
  Letting $i$ go to infinity, we get that $\R_{g_\infty(t)}\in
  \mathbb{R}_+\Id$. Hence by Schur Lemma, $(M_\infty,g_\infty(t))$ has
  constant positive sectional curvature, end is thus compact. The
  compactness of $M_\infty$ implies that $M$ is diffeomorphic to $M_\infty$.
\end{proof}
\begin{rem}
  One should note that this proposition alone doesn't show that the
  Ricci flow converges (after a suitable normalisation) to a metric of
  constant curvature, this can actually be achieved but requires more
  work see for instance the book \cite{zbMATH05673930} by Brendle.
\end{rem}

The problem is thus reduced to the construction of suitable pinching
sets. A powerful method to achieve this is to build
``pinching families'' of curvature cones, which are $1$-parameter
families $(\mathcal{C}_s)_{s\in [0,1)}$ of curvature cones which start
at the curvature condition under consideration and pinch towards the
curvature cone $\mathbb{R}_+\Id$, which corresponds to constant
positive sectional curvature.

\begin{dfn}
  A one parameter family $(\mathcal{C}_{s})_{s\in [0,1)}$ of
  nonnegative curvature cones is called a
  pinching family if:
  \begin{itemize}
  \item $s\mapsto\mathcal{C}_s$ is continuous. 
  \item as $s\to 1$, $\mathcal{C}_s$ converges to $\mathbb{R}_+\Id$.
  \item for each $s>0$, every $\R\in\mathcal{C}_s\backslash\{0\}$ has
    positive trace.
  \item every $\mathcal{C}_s$ is Ricci flow invariant. Moreover for
    any $s>0$ and
    any non zero $\R\in\partial\mathcal{C}_s$, $Q(\R)$ belongs to the
    \emph{interior} of $T_{\R}\mathcal{C}_s$. \footnote{One could say
      that $Q$ points strictly towards the inside of $\mathcal{C}_s$.}
  \end{itemize}
\end{dfn}
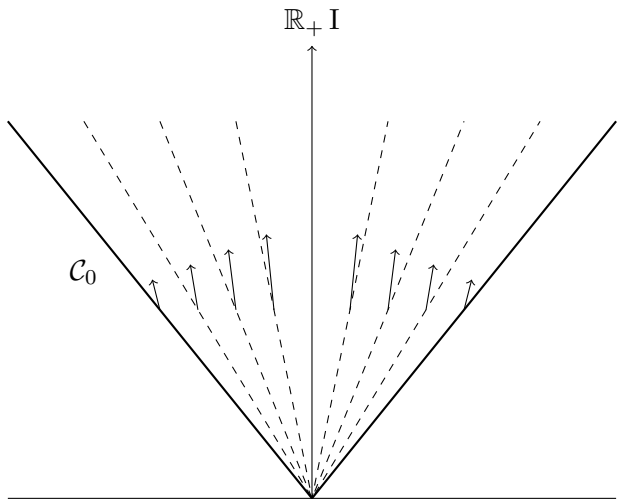
\begin{figure}
  \centering
  \begin{tikzpicture}
    \node at (0,6) [above] {$\mathbb{R}_+\Id$};
    \clip (-4,6) rectangle (4,0);
    \draw (-4,0) -- (4,0);
    \draw[->] (0,0) -- (0,6);
    \draw[thick] (-4,5) -- (0,0) -- (4,5); 
    \draw[->] (-2,2.5) -- +(-0.1,0.4);
    \draw[->] (2,2.5) -- +(0.1,0.4);
    \node at (-3,3) {$\mathcal{C}_0$};
    \draw[dashed] (-3,5) -- (0,0) -- (3,5); 
    \draw[dashed] (-2,5) -- (0,0) -- (2,5); 
    \draw[dashed] (-1,5) -- (0,0) -- (1,5); 
    \draw[->] (-1.5,2.5) -- +(-0.1,0.6);
    \draw[->] (-1,2.5) -- +(-0.1,0.8);
    \draw[->] (-0.5,2.5) -- +(-0.1,1);
    \draw[->] (1.5,2.5) -- +(0.1,0.6);
    \draw[->] (1,2.5) -- +(0.1,0.8);
    \draw[->] (0.5,2.5) -- +(0.1,1);
  \end{tikzpicture}
  
  \caption{A pinching family}
  \label{fig:pinching_family}
\end{figure}

The idea can be tracked back to Hamilton's
first paper on the Ricci flow of $3$-manifolds \cite{zbMATH03794902}, although it was not
really stressed. However, the
definition we give comes from the work of Boehm and Wilking \cite{zbMATH05578712}. In
this important paper, Boehm and Wilking also show the following
proposition, which says that the existence of a pinching family is
actually enough to ensure the existence of a pinching set:

\begin{thm}
  Let $(\mathcal{C}_s)_{s\in [0,1)}$ be a pinching family. Then
  $\mathcal{C}_0$ has the pinching property.
\end{thm}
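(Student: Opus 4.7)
The goal, given a compact $K \subset \operatorname{int}(\mathcal{C}_0)$, is to exhibit a pinching set $F \supset K$. The plan is to construct $F$ as an intersection $F = \bigcap_i F_{s_i}$ taken over a sequence $s_i \uparrow 1$ in the pinching family, where each $F_{s_i}$ is a Ricci flow invariant closed convex $O(n)$-invariant set containing $K$, obtained as an affine enlargement of $\mathcal{C}_{s_i}$. The essential input is the strict Ricci flow invariance built into the pinching-family definition: for $s>0$ and nonzero $\R\in\partial\mathcal{C}_s$, $Q(\R)$ lies in the \emph{interior} of $T_\R\mathcal{C}_s$. This strictness provides a quantitative gap that accommodates the correction terms introduced by an affine shift of $\mathcal{C}_s$.

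Concretely, for each $s>0$ I would set $F_s := \mathcal{C}_s - \eta_s \Id$ with $\eta_s>0$ chosen large enough that $K + \eta_s \Id \subset \mathcal{C}_s$; this is possible since $\Id \in \operatorname{int}(\mathcal{C}_s)$ and $K$ is compact. To check Ricci flow invariance of $F_s$, note that at a boundary point $\R\in\partial F_s$ the shifted element $\R' := \R + \eta_s\Id$ lies in $\partial\mathcal{C}_s$ and $T_\R F_s = T_{\R'}\mathcal{C}_s$. Using that $Q$ is quadratic with associated bilinear form $B$, one expands
\[
Q(\R) \;=\; Q(\R') \;-\; 2\eta_s B(\R',\Id) \;+\; \eta_s^2 Q(\Id).
\]
The first term lies strictly inside $T_{\R'}\mathcal{C}_s$ by the pinching-family hypothesis; the last is a positive multiple of $\Id$, which is interior to this tangent cone since $\Id\in\operatorname{int}(\mathcal{C}_s)$; and the cross term $-2\eta_s B(\R',\Id)$ must be absorbed into the strict-invariance gap. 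By $O(n)$-equivariance, quadratic homogeneity, and compactness of the unit boundary sphere of $\mathcal{C}_s$, this gap is quantitatively positive on the relevant scales, possibly after truncating $F_s$ by intersection with the Ricci flow invariant half-space $\{\tr \R \geq 0\}$ to discard the "apex region'' where $\|\R'\|$ is so small that the cross term dominates.

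Granting this step, set $F := \bigcap_i F_{s_i}$ for a sequence $s_i \uparrow 1$ and verify the pinching-set axioms: closure, convexity, $O(n)$-invariance and containment of $K$ pass to intersections; Ricci flow invariance does too, since at $\R\in\partial F$ one has $T_\R F = \bigcap_{i\,:\,\R\in\partial F_{s_i}} T_\R F_{s_i}$, each of which contains $Q(\R)$. For the rescaling condition, $\lambda F \subset \lambda F_{s_i} = \mathcal{C}_{s_i} - \lambda\eta_{s_i}\Id$, which converges in Hausdorff topology on compacta to the cone $\mathcal{C}_{s_i}$ as $\lambda\to 0$; since $\mathbb{R}_+\Id \subset F$ (because $\mathbb{R}_+\Id + \eta_{s_i}\Id \subset \mathbb{R}_+\Id \subset \mathcal{C}_{s_i}$ for every $i$), the Hausdorff limit $\lim_{\lambda\to 0}\lambda F$ is squeezed between $\mathbb{R}_+\Id$ and $\bigcap_i \mathcal{C}_{s_i} = \mathbb{R}_+\Id$, yielding the desired convergence. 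The main obstacle is precisely the cross-term control of paragraph 2: as $s\to 1$ the cones $\mathcal{C}_s$ become thin, so $\eta_s$ must be taken larger, straining the strict-invariance gap. If the purely scalar shift proves insufficient, one should replace it by a linear transformation scaling the three components $\R_{\Id},\R_0,\R_\mathcal{W}$ of the decomposition \eqref{eq:decomp} separately, in the spirit of B\"ohm--Wilking.
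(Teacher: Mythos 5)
The survey itself offers no proof of this theorem: it is quoted from B\"ohm and Wilking \cite{zbMATH05578712}, so your proposal can only be measured against their argument. Your architecture is in fact exactly theirs: the pinching set is built as a countable intersection of affine shifts $\mathcal{C}_{s_i}-h_i\Id$ of cones from the family, containment of $K$ and the rescaling axiom are verified essentially as you do, and the heart of the matter is the ODE-invariance of each shifted set, which rests on expanding $Q(\R'-h\Id)$ and absorbing the cross term $-2hB(\R',\Id)$ into the two gaps supplied by the strict invariance of $\mathcal{C}_s$ (of order $\delta_s\|\R'\|^2$, using homogeneity and compactness of the unit boundary sphere) and by $h^2Q(\Id)$ being interior to the cone (of order $c_s h^2$). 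All of this is correctly identified.

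The gap is in the step you yourself flag as the main obstacle, and the fix you propose does not close it. The required inequality $\delta_s\|\R'\|^2+c_sh^2\ge 2C h\|\R'\|$ fails on an intermediate range $\|\R'\|\in h\cdot(r_-(s),r_+(s))$ whenever $\sqrt{\delta_s c_s}<C$, and nothing in the definition of a pinching family rules this out; here $r_+(s)=\bigl(C+\sqrt{C^2-\delta_s c_s}\bigr)/\delta_s$ can be arbitrarily large as $\delta_s$ gets small. Truncating by $\{\tr\R\ge 0\}$ only forces $\|\R'\|\ge h\sqrt{N}$ with $N=\dim\Lambda^2\mathbb{R}^n$, a \emph{fixed} multiple of $h$ that need not exceed $r_+(s)h$. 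What B\"ohm and Wilking actually do is truncate by the \emph{previous} shifted set $\mathcal{C}_{s_{i-1}}-h_{i-1}\Id$ and exploit the continuity of $s\mapsto\mathcal{C}_s$: choosing $s_i$ close enough to $s_{i-1}$ makes the opening of $\mathcal{C}_{s_{i-1}}$ around $\mathbb{R}_+\Id$ comparable to that of $\mathcal{C}_{s_i}$, so that $\R+h_{i-1}\Id\in\mathcal{C}_{s_{i-1}}$ together with $\R+h_i\Id\in\partial\mathcal{C}_{s_i}$ forces $\|\R+h_i\Id\|\ge L(h_i-h_{i-1})$ with $L$ as large as one pleases; the parameters $(s_i,h_i)$ must then be chosen inductively, each in terms of the preceding pair. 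A second, more minor elision: the uniform gap $\delta_s>0$ on $\partial\mathcal{C}_s\cap S^{N-1}$ requires a lower semicontinuity argument for $\R\mapsto\operatorname{dist}\bigl(Q(\R),\partial T_\R\mathcal{C}_s\bigr)$, since the tangent cone does not vary continuously at non-smooth boundary points.
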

\begin{rem}
  From a PDE point of view, the strength of this result can be seen in
  the following way: it allows to build from a family of scale
  invariant estimate (the pinching family) a self improving non scale
  invariant estimate (the pinching set). Usually, non scale invariant
  estimates are harder to obtain.
\end{rem}

Let us now give some examples of pinching families, the first example
is specific to dimension 3, and is implicit in Hamilton's original work:
\begin{prop}[\cite{zbMATH03794902}]
  The curvature cones \[\mathcal{C}_{s}=\left\{\R\in
    S^2_B\Lambda^2\mathbb{R}^3\ 
    \middle|\ \rho(\R)\geq\frac{2s}{3}(\tr\R)\id\geq 0\right\}\]
  for $s\in[0,1)$ form a pinching family such that $\mathcal{C}_0=\{\R|\rho(\R)\geq 0\}$.
\end{prop}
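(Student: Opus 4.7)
Work with the Ricci eigenvalues. In dimension 3 the Weyl part vanishes, so $\R$ is determined by $\rho(\R)$, and one can pick an orthonormal basis $(e_1,e_2,e_3)$ of $\mathbb{R}^3$ diagonalizing $\rho(\R)$ with eigenvalues $\mu_1\le\mu_2\le\mu_3$; the corresponding basis of $\Lambda^2\mathbb{R}^3$ diagonalizes $\R$ with eigenvalues $\lambda_1,\lambda_2,\lambda_3$ satisfying $\mu_i=\lambda_j+\lambda_k$ for $\{i,j,k\}=\{1,2,3\}$, and $\scal=\mu_1+\mu_2+\mu_3=2\tr\R$. The defining inequalities of $\mathcal{C}_s$ then read
\[\mu_1 \ge \tfrac{s}{3}\scal \ge 0.\]
This makes the identity $\mathcal{C}_0=\{\R\mid\rho(\R)\ge 0\}$, the continuity of $s\mapsto\mathcal{C}_s$, and the convergence $\mathcal{C}_s\to\mathbb{R}_+\Id$ as $s\to 1$ all immediate (the last one since $\mu_1\ge\tfrac13\scal$ combined with $\mu_1\le\mu_2\le\mu_3$ forces $\mu_1=\mu_2=\mu_3$). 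For $s>0$, if $\R\in\mathcal{C}_s$ has $\tr\R=0$, then $\scal=0$ and $\mu_i\ge 0$ force $\rho(\R)=0$, and in dimension 3 this gives $\R=0$; hence $\tr\R>0$ on $\mathcal{C}_s\setminus\{0\}$.

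\textbf{Strict pointing in.} Hamilton's ODE $\R'=Q(\R)$ reduces in dimension 3 to the classical system $\lambda_i'=\lambda_i^2+\lambda_j\lambda_k$. Substituting $\mu_i=\lambda_j+\lambda_k$ one obtains the identities
\[\mu_1' = \tfrac12(\mu_2-\mu_3)^2+\tfrac12\mu_1(\mu_2+\mu_3), \qquad \scal' = \mu_1^2+\mu_2^2+\mu_3^2.\]
Fix $s>0$ and a nonzero $\R\in\partial\mathcal{C}_s$, and write $\Psi(\R)=\rho(\R)-\tfrac{s}{3}\scal\cdot\id$. Any unit $v\in\ker\Psi(\R)$ lies in the $\mu_1$-eigenspace of $\rho(\R)$, and $\mu_1=\tfrac{s}{3}\scal>0$; the derivative of $\Psi$ along the ODE, evaluated on $v$, equals $\mu_1'-\tfrac{s}{3}\scal'$ (this number is unambiguous even when $\mu_1=\mu_2$, since the formula above is symmetric under permutation of repeated indices, so $\dot{\Psi}$ is a multiple of the identity on the two-dimensional kernel). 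Using the boundary relation $\mu_2+\mu_3=\tfrac{3-s}{s}\mu_1$ and writing $D=\mu_2-\mu_3$, a direct simplification yields the key identity
\[\mu_1'-\tfrac{s}{3}\scal' \;=\; \tfrac{3-s}{6}\,D^2 + \tfrac{1-s}{2}\,\mu_1^2,\]
which is strictly positive for $s\in(0,1)$ since $\mu_1>0$. This establishes both the Ricci flow invariance of each $\mathcal{C}_s$ and the strict pointing-in property at $s>0$; at $s=0$ the same identity yields $\mu_1'\ge 0$ on the boundary $\mu_1=0$, recovering Hamilton's original invariance of $\mathcal{C}_0$.

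\textbf{Main obstacle.} The substance of the argument is the algebraic identity on the boundary above. The crucial feature is that the coefficient of $\mu_1^2$ is $\tfrac{1-s}{2}$, so the inequality is strict precisely when $s<1$ and degenerates at $s=1$, which is exactly what makes $s=1$ the natural endpoint of the pinching family. The only secondary subtlety, that of a multiple smallest Ricci eigenvalue, is handled by the permutation symmetry of the formula for $\mu_i'$: this forces $\dot\Psi$ to be scalar on the degenerate eigenspace, so the single inequality $\mu_1'-\tfrac{s}{3}\scal'>0$ suffices to certify the cone-interior condition on the entire kernel.
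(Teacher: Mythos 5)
Your proof is correct, and it is exactly the argument implicit in the reference the paper cites (Hamilton's eigenvalue computation for the dimension-3 ODE $\lambda_i'=\lambda_i^2+\lambda_j\lambda_k$); the survey itself states the proposition without proof. I checked the key boundary identity $\mu_1'-\tfrac{s}{3}\scal'=\tfrac{3-s}{6}(\mu_2-\mu_3)^2+\tfrac{1-s}{2}\mu_1^2$ directly (it simplifies via $\mu_2+\mu_3=\tfrac{3-s}{s}\mu_1$ to the bracket $\tfrac{3(3-s)-2s^2-(3-s)^2}{6s}=\tfrac{1-s}{2}$), and your handling of the degenerate case $\mu_1=\mu_2$ via the scalarity of $\dot\Psi$ on the kernel is the right way to certify the interior-of-tangent-cone condition.
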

This gives a possible proof of Hamilton's result: any compact
$3$-manifold with positive Ricci curvature admits a metric with
constant sectional curvature.

Most of the differentiable sphere theorems obtained through Ricci
flow can be reduced to  the construction of suitable pinching families. (See for
instance \cite{zbMATH03979952}, \cite{zbMATH00702042},
\cite{zbMATH05578712}, \cite{zbMATH05859406}.)

The older previously cited works (\cite{zbMATH03979952},
\cite{zbMATH00702042}) build their pinching families through ad-hoc
constructions. A significant step forward was made by Boehm and
Wilking in \cite{zbMATH05578712}: they used $O(n)$-equivariant transformations formations  
\[\ell_{a,b}(\R)=\R_{\Id}+a\R_0+b\R_\mathcal{W}\]
to build pinching families from previously known Ricci flow invariant
curvature cones. 

More precisely, they considered, for a fixed Ricci
flow invariant curvature cone $\hat{\mathcal{C}}$, the cones:
\[\mathcal{C}_s=\left\{\R\middle|\ell_{a(s),b(s)}\left(\R\right)\in \hat{\mathcal{C}}\text{
  and }\rho(\R)\geq c(s)(\tr\R)\id\geq 0\right\} \]
and showed that, for suitable choices of $a(s),b(s)$ and $c(s)$, for
$\hat{\mathcal{C}}=\mathcal{C}_{2PCO}$, the cones $\mathcal{C}_s$ form
a pinching family.

This construction has been re-used by Brendle and Schoen in their
proof of the differentiable sphere theorem for $\tfrac{1}{4}$-pinched
manifold to build a pinching family starting at the cone
$\mathcal{C}_{IC2}$, see \cite{zbMATH05859406}. 

Let us end this section by mentioning Brendle's sphere theorem, in
the language we have developed it can phrased as:
the nonnegative curvature cone $\mathcal{C}_{IC1}$ has the pinching
property. However Brendle's proof is different from the strategy of
proof we have outlined here: it doesn't show the existence of a
pinching family starting at $\mathcal{C}_{IC1}$, but directly show the
existence of a pinching set. See \cite{zbMATH05859406} or
\cite{zbMATH05673930} for details.

\section{Restrictions on Ricci flow invariant curvature cones}
\label{sec:restr-ricci-flow}

The previous sections have stressed the importance of Ricci invariant
curvature cones, and showed which kind of examples are
available. Also, it should be noted that it is often computationally hard to
prove that a curvature cone is invariant. 

It is thus important to find
necessary conditions satisfied by every Ricci flow invariant curvature
cones, so that as many as possible curvature cones one can think of
can be discarded in advance without any computation.

This idea has not been investigated until recently, and only a handful of results are
available now. We present them here.

\subsection{Dimension independent restrictions}
\label{sec:dimens-indep-restr}

The first restrictions that were obtained dealt only with Wilking
cones. These can be found in \cite{zbMATH06176513} by Gururaja, Maity and Seshadri.

\begin{thm}
  Let $\mathcal{C}\subset S^2_B\Lambda^2\mathbb{R}^n$ be a Wilking
  with $n\geq 5$, then $\mathcal{C}$ is contained in the cone
  $\mathcal{C}_{IC}$ of manifolds with nonnegative isotropic curvature.
\end{thm}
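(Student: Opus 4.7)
The key idea is that for $n \geq 5$, $\mathfrak{so}(n,\mathbb{C})$ is a simple complex Lie algebra, and I want to exploit the structure of $SO(n,\mathbb{C})$-orbits on its projectivization. Since the adjoint representation is then irreducible, a standard result from geometric invariant theory says that $\mathbb{P}(\mathfrak{so}(n,\mathbb{C}))$ has a unique closed $SO(n,\mathbb{C})$-orbit, namely the projective class of the highest weight line, and this orbit is contained in the closure of every other orbit (any orbit closure is a closed projective $SO(n,\mathbb{C})$-invariant subvariety, hence contains at least one closed orbit, which must be the unique one). The first step is to identify this closed orbit explicitly. For both types $B_m$ ($n=2m+1\geq 5$) and $D_m$ ($n=2m\geq 6$), one can choose a basis $e_1,\ldots$ of $\mathbb{C}^n$ whose first half spans a maximal totally isotropic subspace; then $e_1\wedge e_2$ is a highest root vector of $\mathfrak{so}(n,\mathbb{C})$, and since $e_1, e_2$ are isotropic and mutually orthogonal, $(e_1\wedge e_2)^2 = 0$ and $\rg(e_1\wedge e_2)=2$, so $e_1\wedge e_2 \in S_{IC}$, where $S_{IC}:=\{\omega\mid\omega^2=0,\ \rg\omega=2\}$ is precisely the Wilking-defining set for $\mathcal{C}_{IC}$. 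Hence the unique closed orbit in $\mathbb{P}(\mathfrak{so}(n,\mathbb{C}))$ is $\mathbb{P}(S_{IC})$.

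The proof then proceeds as follows. Let $\R \in \mathcal{C}(S)$ and fix any $\omega_0 \in S_{IC}$. Since we may assume $\mathcal{C}(S)\neq S^2_B\Lambda^2\mathbb{R}^n$, the set $S$ contains a nonzero $\omega_1$. By the previous paragraph, $[\omega_0]$ lies in the closure of $SO(n,\mathbb{C})\cdot[\omega_1]$ in $\mathbb{P}(\mathfrak{so}(n,\mathbb{C}))$, so there exist sequences $g_k \in SO(n,\mathbb{C})$ and $\mu_k \in \mathbb{C}^*$ such that $\mu_k g_k\cdot\omega_1 \to \omega_0$. The $SO(n,\mathbb{C})$-invariance of $S$ gives $g_k\cdot\omega_1\in S$, hence $\ps{\R(g_k\omega_1)}{\overline{g_k\omega_1}}\geq 0$; multiplying by $|\mu_k|^2 \geq 0$ and passing to the limit yields $\ps{\R\omega_0}{\bar\omega_0}\geq 0$. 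Since $\omega_0\in S_{IC}$ was arbitrary, $\R\in\mathcal{C}_{IC}$.

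I expect the main obstacle to be making the ``unique closed orbit'' step rigorous, as it crucially uses irreducibility of the adjoint representation; this is precisely why $n \geq 5$ is required. For $n=4$ one has $\mathfrak{so}(4,\mathbb{C})\simeq\mathfrak{sl}_2(\mathbb{C})\oplus\mathfrak{sl}_2(\mathbb{C})$, splitting the adjoint representation into self-dual and anti-self-dual summands, each contributing its own closed orbit in projective space; accordingly there exist ``oriented'' Wilking-type cones (as alluded to by the author in Section~\ref{sec:defin-first-prop}) that are not contained in $\mathcal{C}_{IC}$. A more elementary alternative, closer in spirit to what Gururaja--Maity--Seshadri likely do, would bypass the GIT machinery and instead bring an arbitrary nonzero $\omega_1 \in S$ to a normal form under $SO(n,\mathbb{C})$ and exhibit explicitly a one-parameter family $t\mapsto g_t\cdot\omega_1$ together with a rescaling $\mu_t$ that degenerates $\omega_1$ to a prescribed element of $S_{IC}$ as $t\to 0$; checking in detail that such a degeneration always exists (and hits every $SO(n,\mathbb{C})$-orbit in $S_{IC}$) is the concrete version of the obstacle above.
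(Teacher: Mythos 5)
Your argument is correct and is essentially the approach the paper alludes to (and that Gururaja--Maity--Seshadri carry out): for $n\ge 5$ the adjoint representation of $SO(n,\mathbb{C})$ is irreducible, the unique closed orbit in $\mathbb{P}(\mathfrak{so}(n,\mathbb{C}))$ is the highest-weight (minimal nilpotent) orbit $\mathbb{P}(S_{IC})$, and the nonnegativity condition passes to the limit along a degenerating sequence $\mu_k g_k\cdot\omega_1\to\omega_0$. The only points worth making explicit are that the statement implicitly excludes the trivial Wilking cone $S\subseteq\{0\}$ (which you correctly flag), and that Zariski and Euclidean closures of orbits of complex algebraic groups coincide, which is what legitimates extracting the convergent sequence from the orbit-closure statement.
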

The proof takes advantage the Wilking cone property by using some advanced
results on the orbits of the adjoint action of Lie groups. In the same
paper, the Wilking cones which had the pinching property were also
characterized:
\begin{thm}
  Let $\mathcal{C}\subset S^2_B\Lambda^2\mathbb{R}^n$ be a Wilking
  with $n\geq 5$, assume that $\mathcal{C}$ has the pinching property,
  then $\mathcal{C}$ is contained in the cone
  $\mathcal{C}_{IC1}$.  
\end{thm}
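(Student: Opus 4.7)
The natural strategy is to argue by contrapositive: assume the Wilking cone $\mathcal{C}=\mathcal{C}(S)$ is not contained in $\mathcal{C}_{IC1}$, and derive that $\mathcal{C}$ fails to have the pinching property. Without loss of generality, $S\subset\mathfrak{so}(n,\mathbb{C})\simeq\Lambda^{2}\mathbb{C}^{n}$ can be taken closed and invariant under $\mathbb{C}^{\ast}\cdot SO(n,\mathbb{C})$, since none of these operations modifies $\mathcal{C}(S)$. The previous theorem forces $S$ to contain the orbit $\mathcal{O}_{\mathrm{tot}}$ of simple $2$-vectors corresponding to totally isotropic planes, while the hypothesis translates into $S\not\supset\mathcal{O}_{\mathrm{rk1}}$, the orbit for planes whose restricted bilinear form has rank exactly $1$.

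The key algebraic step is an orbit-closure analysis on $\mathfrak{so}(n,\mathbb{C})$. The three $\mathbb{C}^{\ast}\cdot SO(n,\mathbb{C})$-orbits of simple $2$-vectors are distinguished by the isotropy type (totally isotropic, half-isotropic, non-degenerate) of their underlying plane. Explicit degenerations such as $\omega_{t}=(e_{1}+ite_{2})\wedge e_{n}$, which lies in the non-degenerate orbit for $t\neq 1$ and in $\mathcal{O}_{\mathrm{rk1}}$ at $t=1$, show inclusions between the corresponding closures. A classification of which $SO(n,\mathbb{C})$-invariant pieces can be added to $S$ without forcing $\mathcal{O}_{\mathrm{rk1}}\subset S$ by closure, using the Bala--Carter classification of nilpotent orbits together with the semisimple/nilpotent Jordan decomposition, reduces the problem to finitely many ``minimal'' candidates $S$.

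For each such candidate one then invokes the proposition of \S\ref{sec:one-param-famil} -- the pinching property implies a differentiable sphere theorem -- and exhibits a compact Riemannian $n$-manifold whose curvature operator lies in the interior of the corresponding $\mathcal{C}(S)$ but which is not diffeomorphic to a spherical space form. The coarsest case is $\mathcal{C}(S)=\mathcal{C}_{IC}$, for which Micallef--Wang type constructions on $S^{n-1}\times S^{1}$ provide metrics of strictly positive isotropic curvature with $\pi_{1}\neq 1$; finer cones between $\mathcal{C}_{IC}$ and $\mathcal{C}_{IC1}$ require more delicate perturbations of these examples to verify strict positivity inside the candidate cone.

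The main obstacle is precisely this combined orbit-closure and geometric matching: although the degenerations among the three rank-$2$ orbits are elementary, ruling out higher-rank $SO(n,\mathbb{C})$-invariant pieces whose closures might avoid $\mathcal{O}_{\mathrm{rk1}}$ requires the nilpotent orbit classification together with the dominance order on partitions for $\mathfrak{so}(n,\mathbb{C})$, and then matching each surviving candidate with an explicit compact manifold demands care. The hypothesis $n\geq 5$ enters precisely here, to avoid the exceptional splitting $\mathfrak{so}(4,\mathbb{C})\simeq\mathfrak{so}(3,\mathbb{C})\oplus\mathfrak{so}(3,\mathbb{C})$, which introduces additional invariant subsets not controlled by the standard classification.
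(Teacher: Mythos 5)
Two preliminary remarks. First, the survey does not actually prove this theorem: it only states it and cites Gururaja--Maity--Seshadri, so your attempt is being measured against the argument of that reference rather than against a proof printed here. Second, your overall skeleton is the right one: argue by contrapositive, normalize $S$ to be closed and $\mathbb{C}^{*}\cdot SO(n,\mathbb{C})$-invariant, study orbit closures of decomposable elements of $\mathfrak{so}(n,\mathbb{C})$, and contradict the sphere theorem that the pinching property would yield by producing a compact manifold with infinite fundamental group and positive $\mathcal{C}(S)$-curvature. Your degeneration $(e_1+ite_2)\wedge e_n$ is exactly the right observation: a closed invariant $S$ meeting either the rank-one-isotropy orbit $\mathcal{O}_{\mathrm{rk1}}$ or the nondegenerate orbit must contain all of $\mathcal{O}_{\mathrm{rk1}}$, hence (together with $\mathcal{O}_{\mathrm{tot}}\subset S$) satisfies $\mathcal{C}(S)\subset\mathcal{C}_{IC1}$. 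One small logical slip: $\mathcal{O}_{\mathrm{tot}}\subset S$ does not follow formally from the \emph{statement} of the previous theorem (the assignment $S\mapsto\mathcal{C}(S)$ is far from injective); it follows from the orbit-closure lemma underlying its proof.

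The genuine gap is in what you do after reducing to the case where $S$ avoids both orbits of non-totally-isotropic decomposables. At that point you launch a Bala--Carter classification of all remaining invariant subsets, reduce to ``finitely many minimal candidates,'' and propose to match each candidate with a bespoke compact manifold, explicitly conceding that the ``finer cones'' require ``more delicate perturbations'' which you do not supply. That unexecuted program is precisely where your proof is incomplete, and it is also unnecessary: a single operator handles every such $S$ at once, namely the curvature operator $\R_{0}$ of the product metric on $S^{n-1}\times S^{1}$, which acts as the identity on $\Lambda^{2}\mathbb{C}^{n-1}$ and as zero on $\mathbb{C}^{n-1}\wedge e_{n}$. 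For every $\omega\in\Lambda^{2}\mathbb{C}^{n}$ one has $\langle \R_{0}\omega,\bar{\omega}\rangle=\|\pi(\omega)\|^{2}$ with $\pi$ the projection onto $\Lambda^{2}\mathbb{C}^{n-1}$, and this vanishes exactly for $\omega=v\wedge e_{n}$; such $\omega$ are decomposable with $e_{n}$ (non-isotropic) in their plane, hence lie in $\mathcal{O}_{\mathrm{rk1}}\cup\mathcal{O}_{\mathrm{nondeg}}$. So if $S$ avoids these two orbits, $\langle \R_{0}\omega,\bar{\omega}\rangle>0$ for \emph{every} nonzero $\omega\in S$ --- including all the higher-rank and indecomposable elements you were trying to classify --- and a compactness argument on the projectivization of the closed set $S$ places $\R_{0}$ in the interior of $\mathcal{C}(S)$. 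Then $S^{n-1}\times S^{1}$ has positive $\mathcal{C}(S)$-curvature but $\pi_{1}=\mathbb{Z}$, contradicting the sphere theorem. Note in particular that the product metric itself is already strictly PIC; no Micallef--Wang connected-sum construction or perturbation is needed, and invoking them does not close the gap for the cones strictly between $\mathcal{C}_{IC1}$ and $\mathcal{C}_{IC}$.
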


The next results do not require the Wilking assumptions. They come
from the article\cite{2013arXiv1308.1190R}, by H. Seshadri and the author.

\begin{thm}
  Let $\mathcal{C}\subset S^2_B\Lambda^2\mathbb{R}^n$, $n\geq 4$, be a
  Ricci flow invariant curvature cone, assume $\mathcal{C}$ contains
  $\mathcal{W}$, the space of Ricci flat curvature operators, then
  $\mathcal{C}$ is either the cone $\mathcal{C}_{\scal}$ or the whole
  space $S^2_B\Lambda^2\mathbb{R}^n$.
\end{thm}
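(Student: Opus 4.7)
The overall plan is to prove the stronger statement $\mathcal{C} \supset \mathcal{C}_{\scal}$; then, since $\Id$ lies in the interior of $\mathcal{C}_{\scal}$ and hence of $\mathcal{C}$, the proposition from Section~\ref{sec:defin-first-prop} (a nonnegative curvature cone not equal to the full space must lie in $\mathcal{C}_{\scal}$) forces the dichotomy $\mathcal{C} = \mathcal{C}_{\scal}$ or $\mathcal{C} = S^2_B\Lambda^2\mathbb{R}^n$. The crucial first observation is that $\mathcal{W} \subset \mathcal{C}$ is a linear subspace sitting inside a convex cone: both $W$ and $-W$ lie in $\mathcal{C}$ for every $W \in \mathcal{W}$, so $\mathcal{C}$ is invariant under translation by $\mathcal{W}$, i.e.\ $\mathcal{C} + \mathcal{W} = \mathcal{C}$.

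Next I would inject dynamical information via Ricci flow invariance. For any $W \in \mathcal{W} \subset \mathcal{C}$, the ODE $\R'(t) = Q(\R(t))$ with initial data $W$ keeps $\R(t)$ in $\mathcal{C}$ for small $t \geq 0$; subtracting $W$ (legal since $-W \in \mathcal{W} \subset \mathcal{C}$), scaling by $1/t > 0$, and invoking closedness of $\mathcal{C}$, the limit $\lim_{t \to 0^+}(\R(t) - W)/t = Q(W)$ lies in $\mathcal{C}$. Since $W$ is Weyl, $\tr Q(W) = 2|\rho(W)|^2 = 0$, so $Q(W)$ has no $\mathbb{R}\Id$-component; subtracting its Weyl part (again using translation invariance by $\mathcal{W}$), the $S^2_0\mathbb{R}^n \wedge \id$-component $(Q(W))_0$ lies in $\mathcal{C}$.

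The main obstacle is to show that $(Q(W))_0 \neq 0$ for some $W \in \mathcal{W}$, i.e.\ the non-vanishing of the $O(n)$-equivariant quadratic map $\mathcal{W} \to S^2_0\mathbb{R}^n \wedge \id$ sending $W \mapsto (Q(W))_0$. For $n \geq 5$ this is to be checked by an explicit calculation on a suitably chosen Weyl operator. Dimension $4$ is genuinely exceptional: the splitting $\mathcal{W} = \mathcal{W}^+ \oplus \mathcal{W}^-$ together with $[\Lambda^+,\Lambda^-]=0$ in $\mathfrak{so}(4)$ forces both $\R^2$ and $\R^{\#}$ to be block diagonal with trace-free blocks for any $\R \in \mathcal{W}$, yielding $\rho(Q(\R))=0$; so this step must be reworked in that dimension. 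Granting non-vanishing, pick any nonzero $A \wedge \id \in \mathcal{C} \cap (S^2_0\mathbb{R}^n \wedge \id)$: its $O(n)$-orbit has closed convex hull containing a neighborhood of $0$ in $S^2_0\mathbb{R}^n$ (Schur--Horn: partial sums of small traceless spectra lie below those of any nonzero traceless spectrum), so the closed convex cone it generates is all of $S^2_0\mathbb{R}^n \wedge \id$. Combined with $\mathcal{W} \subset \mathcal{C}$, this yields $\mathcal{C} \supset \{\R : \tr \R = 0\}$.

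To conclude, pick any nonzero $A \in S^2_0\mathbb{R}^n$ and apply the ODE to $\R_1 = A \wedge \id \in \mathcal{C}$. A direct computation gives $\rho(\R_1) = \tfrac{n-2}{2}A \neq 0$, so the evolution $\tfrac{d}{dt}\tr \R(t) = 2|\rho(\R(t))|^2$ is strictly positive at $t=0$, and hence $\R(t) \in \mathcal{C}$ has positive trace for small $t > 0$. Averaging $\R(t)$ over $O(n)$ produces a positive multiple of $\Id$ in $\mathcal{C}$, so $\Id \in \mathcal{C}$. Convexity with $\{\tr \R = 0\} \subset \mathcal{C}$ then yields $\mathcal{C} \supset \mathbb{R}_+\Id + \{\tr \R = 0\} = \mathcal{C}_{\scal}$, and the proposition cited at the outset finishes the proof.
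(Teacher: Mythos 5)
Your reduction steps are individually sound: $\mathcal{W}$ being a linear subspace of the convex cone gives $\mathcal{C}+\mathcal{W}=\mathcal{C}$; the ODE argument does put $Q(W)$ in $\mathcal{C}$ for $W\in\mathcal{W}$; the Kostant/Schur--Horn orbit argument does show that one nonzero element of $S^2_0\mathbb{R}^n\wedge\id$ in $\mathcal{C}$ forces $\{\R\,|\,\tr\R=0\}\subset\mathcal{C}$; and the final averaging step is fine. But the step you defer to ``an explicit calculation on a suitably chosen Weyl operator'' --- finding $W\in\mathcal{W}$ with $(Q(W))_0\neq 0$ --- is impossible in \emph{every} dimension, not only in dimension $4$. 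One has the identity $\rho(Q(\R))_{ij}=\sum_{p,q}R_{ipjq}\,\rho(\R)_{pq}$: this is exactly the cancellation underlying Hamilton's evolution equation for the Ricci tensor (the positive-definite term $\tfrac12\sum_{k,p,q}R_{ikpq}R_{jkpq}$ contributed by $\rho(\R^2)$ is cancelled by terms from $\rho(\R^\#)$). Hence $\rho(\R)=0$ implies $\rho(Q(\R))=0$, i.e. $Q(\mathcal{W})\subseteq\mathcal{W}$ --- which is precisely the remark ``$Q(\Wop)\in\mathcal{W}$'' that the paper's own sketch relies on. So $(Q(W))_0=0$ identically, the ODE trajectory launched from any $W\in\mathcal{W}$ never leaves $\mathcal{W}$, and no amount of differentiating along such trajectories (nor polarizing $Q$ within $\mathcal{W}$) can produce the element of $S^2_0\mathbb{R}^n\wedge\id$ your argument needs.

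The missing idea is that the invariance must be tested at boundary points of $\mathcal{C}$ having a nontrivial $S^2_0\mathbb{R}^n\wedge\id$ component, and against the \emph{cross} term of $Q$. The paper argues by contradiction: if $\mathcal{C}$ is neither $\mathcal{C}_{\scal}$ nor everything, then (by essentially your Steps 5--6 run in reverse) $\mathcal{C}$ contains no nonzero element of $S^2_0\mathbb{R}^n\wedge\id$, so for a suitable $\R_0\in S^2_0\mathbb{R}^n\wedge\id$ there is a finite maximal $a>0$ with $\R=\Id+a\R_0\in\partial\mathcal{C}$. Expanding $Q(\R+t\Wop)\in T_{\R+t\Wop}\mathcal{C}=T_\R\mathcal{C}$ in $t$ and using $Q(\Wop)\in\mathcal{W}$ (a two-sided element of $T_\R\mathcal{C}$), one extracts $B(\R,\Wop)\in T_\R\mathcal{C}$; explicit choices of $\R_0$ and $\Wop$ give $B(\R,\Wop)=\R_0$, which points strictly outside $\mathcal{C}$ at $\R$ --- the desired contradiction. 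Without this shift from $Q$ restricted to $\mathcal{W}$ to the bilinear term $B(\cdot,\Wop)$ at boundary points outside $\mathcal{W}$, your approach cannot be completed.
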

\begin{rem}
  For representation theoretic reasons, any curvature cone which
  contains a non-vanishing tensor $W\in\mathcal{W}$ automatically
  contains the whole space $\mathcal{W}$. This will be useful in the applications.
\end{rem}
\begin{rem}
  Recall that nonnegative Ricci curvature is not preserved under Ricci
  flow in dimension 4 and above (see Section
  \ref{sec:first-examples}). It is thus quite natural to ask wether
  some weaker condition which is implied by nonnegative Ricci
  curvature is preserved, as the results proven using this condition
  would apply to the important class of manifolds with nonnegative
  Ricci curvature. The above result shows the only condition one can
  get following this idea is the ``nonnegative scalar curvature'' condition.
\end{rem}
\begin{proof}[Sketch of proof:]
  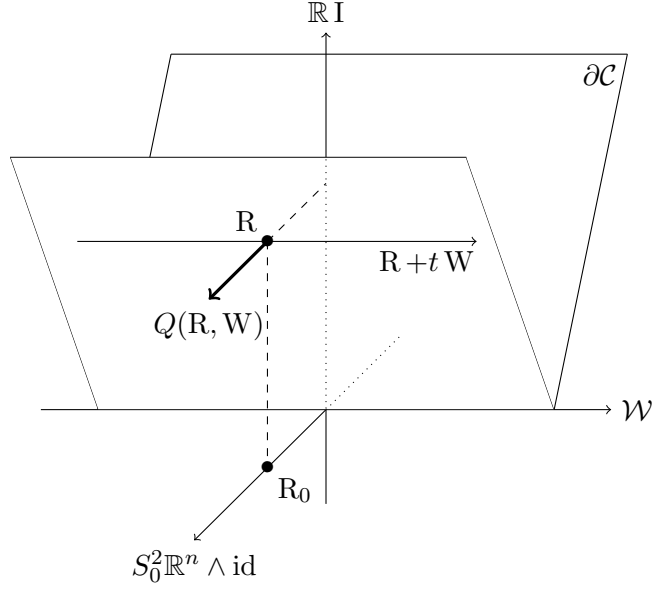
\begin{figure}[h]
  \centering
  \begin{tikzpicture}[scale=2.5]
    \coordinate (P1) at (-1.2,1.5,-1);
    \coordinate (PP1) at (-1.2,0,0);
    \coordinate (Q1) at (1.2,1.5,-1);
    \coordinate (QQ1) at (1.2,0,0);
    \coordinate (R1) at (-1.2,1.8,1.2);
    \coordinate (S1) at (1.2,1.8,1.2);
    \coordinate (L) at (0,1.2,0.8);
    \coordinate (LI) at (0,1.2,0);
    \coordinate (L0) at (0,0,0.8);    
    \coordinate (LpQLW) at (0,1.2,1.6);
    \coordinate (Lm) at (-1,1.2,0.8);
    \coordinate (Lp) at (1.1,1.2,0.8);
    % \draw[->,thick] (-1,0,0) -- (1,0,0) node[right] {$x$};
    \draw[->] (0,-0.5,0) -- (0,2,0) node[above] {$\mathbb{R}\Id$};
    \draw[->] (0,0,-1) -- (0,0,1.8) node[below] {$S^2_0\mathbb{R}^n\wedge\id$};
    \draw (P1) -- (PP1);
    \draw (QQ1) -- (Q1);
    \draw[->] (-1.5,0,0) -- (1.5,0,0) node[right] {$\mathcal{W}$};
    % \draw[thick] (0,0.3,0) -- (0,0.8,0);
    \draw (P1) -- (Q1);
    \draw (R1) -- (S1);
    \begin{scope}
      \clip  (R1) -- (S1) -- (QQ1) -- (PP1) -- cycle;
      \filldraw[fill=white] (R1) -- (S1) -- (QQ1) -- (PP1) -- cycle;
      \draw[dotted] (0,-0.5,0) -- (0,1.5,0) node[above] {$\mathbb{R}\Id$};      
      \draw[dotted] (0,0,-1) -- (0,0,1.8) node[below] {$S^2_0\mathbb{R}^n\wedge\id$};
    \end{scope}
    \draw (L) node {$\bullet$} node[above left] {$\R$};
    \draw (L0) node {$\bullet$} node[below right] {$\R_0$};
    \draw[dashed] (L) -- (LI);
    \draw[dashed] (L) -- (L0);
    \draw[->] (Lm) -- node[below,very near end] {$\R+t\Wop$} (Lp);
    \draw (Q1) node[below left] {$\partial\mathcal{C}$};
    \draw[very thick,->] (L) -- (LpQLW) node[below] {$Q(\R,\Wop)$};
 \end{tikzpicture}
  \caption{A cone which contains $\mathcal{W}$ must contain $S^2_0\mathbb{R}^n\wedge\id$.}
  \label{fig:curv_cone_W}
\end{figure}
  We argue by contradiction. Let $\mathcal{C}$ be a Ricci flow invariant curvature cone
  satisfying the hypothesis of the theorem but isn't one of the cones
  $\mathcal{C}_{\scal}$ or $S^2_B\Lambda^2\mathbb{R}^n$. Examining the
  decomposition of $S^2_B\Lambda^2\mathbb{R}^n$ as representation of
  $O(n)$ into irreducible components, we see that these hypothesis
  imply that $\mathcal{C}$ doesn't contain any non zero tensor in
  $S^2_0\mathbb{R}^n$ (see Figure \ref{fig:curv_cone_W}).
  
  The proof is based on the
  following observation: let $\R$ be a curvature operator in
  $\partial\mathcal{C}$, and $\Wop$ be a curvature operator in
  $\mathcal{W}$, then $\R+t\Wop\in\partial\mathcal{C}$ and the Ricci
  flow invariance gives that:
  \[Q(\R+t\Wop)\in T_{\R+t\Wop}\mathcal{C}=T_{\R}\mathcal{C}.\]
  We now compute:
  \[Q(\R+t\Wop)=Q(\R)+2tQ(\R,\Wop)+t^2Q(\Wop)\in T_{\R}\mathcal{C}.\]
  We remark that $Q(\Wop)\in\mathcal{W}\subset T_{\R}\mathcal{C}$.
  Since $\mathcal{W}$ is a vector space, $-Q(\Wop)\in
  T_{\R}\mathcal{C}$. We conclude that $Q(\R)+2tQ(\R,\Wop)$ belongs to
  $T_{\R}\mathcal{C}$ as a conical combination of two elements of
  $T_{\R}\mathcal{C}$. Dividing by $2t$ and letting $t$ go to
  infinity, we get that $Q(\R,\Wop)\in T_{\R}\mathcal{C}$.

  To end the proof we produce explicit operators $\R_0\in
  S^2_0\mathbb{R}^n\wedge\id$, $\Wop\in\mathcal{W}$ such that :
  \begin{itemize}
  \item There exists $a>0$ such that $\R=\Id+a\R_0$ belongs to $\mathcal{C}$ but
    $\R=\Id+(a+\eps)\R_0$ is outside of $\mathcal{C}$ for every $\eps>0$.
  \item $Q(\R,\Wop)=\R_0$.
  \end{itemize}
  This implies that $Q(\R,\Wop)$ which should be in
  $T_{\R}\mathcal{C}$, points \emph{outside} $\mathcal{C}$, this
  contradicts the Ricci flow invariance of $\mathcal{C}$. See Figure \ref{fig:curv_cone_W}.
\end{proof}

The following corollary can also be found in \cite{2013arXiv1308.1190R}:
\begin{cor}
  Let $\mathcal{C}\subset S^2_B\Lambda^2\mathbb{R}^n$, $n\geq 4$, be a
  Ricci flow invariant curvature cone, assume $\mathcal{C}$ contains
  in its \emph{interior} the curvature operator of a compact Einstein
  symmetric space with non constant nonnegative sectionnal curvature (such as
  $\mathbb{CP}^n$ or $\mathbb{S}^n\times\mathbb{S}^n$), then 
  $\mathcal{C}$ is either the cone $\mathcal{C}_{\scal}$ or the whole
  space $S^2_B\Lambda^2\mathbb{R}^n$.  
\end{cor}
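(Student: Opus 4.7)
The plan is to reduce the corollary to the preceding theorem by showing that $\mathcal{W}\subset\mathcal{C}$; the theorem then forces the desired dichotomy. By the representation-theoretic remark it is enough to exhibit a single nonzero element of $\mathcal{W}$ in $\mathcal{C}$.

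First I would decompose $\R_0=\lambda\Id+\Wop_0$ along $\mathbb{R}\Id\oplus(S^2_0\mathbb{R}^n\wedge\id)\oplus\mathcal{W}$: the Einstein hypothesis kills the middle summand, compactness together with nonnegative sectional curvature forces $\lambda>0$, and the non-constancy of the sectional curvature forces $\Wop_0\neq 0$. The goal becomes to show $\Wop_0\in\mathcal{C}$.

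I would then study the half-line $t\mapsto\R_0-t\Id$ together with the escape time $t^{*}=\sup\{t\geq 0\mid\R_0-t\Id\in\mathcal{C}\}\in(0,+\infty]$. Three cases cover everything. If $t^{*}=+\infty$, then normalizing $\R_0-t\Id$ as $t\to\infty$ gives $-\Id\in\mathcal{C}$ by closedness, which together with $\Id$ in the interior forces $\mathcal{C}=S^2_B\Lambda^2\mathbb{R}^n$. If $t^{*}\geq\lambda$, closedness yields $\Wop_0=\R_0-\lambda\Id\in\mathcal{C}$ directly. The interesting case is $0<t^{*}<\lambda$, in which I expect Ricci flow invariance to produce a contradiction at the boundary point $\R^{*}=\R_0-t^{*}\Id=\mu\Id+\Wop_0$ with $\mu=\lambda-t^{*}\in(0,\lambda)$. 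The two algebraic inputs are: (a) since $\R_0$ is the curvature operator of an Einstein symmetric space, the Ricci flow on $(M,g_0)$ is a pure rescaling, giving $Q(\R_0)=\lambda(n-1)\R_0$; (b) the Schur-type identity stating that the cross term $B(\Id,\Wop)$ vanishes for every $\Wop\in\mathcal{W}$, which reduces via $B(\Id,\Wop)=\Wop+\Id\#\Wop$ to the elementary computation $\ps{(\Id\#\Wop)\eta}{\eta}=-\tfrac{1}{2}\tr(\ad_\eta^2\,\Wop)=-\ps{\Wop\eta}{\eta}$ on simple bivectors $\eta$. Together (a) and (b) give $Q(\Wop_0)=\lambda(n-1)\Wop_0$ and
\[Q(\R^{*})=\mu^2(n-1)\Id+\lambda(n-1)\Wop_0.\]
A supporting hyperplane to $\mathcal{C}$ at $\R^{*}$ with outward normal $N$ passes through the origin (as $\mathcal{C}$ is a cone), has $\ps{N}{\Id}<0$ (since $\Id$ lies in the interior), and satisfies $\ps{N}{\R^{*}}=0$, forcing $\ps{N}{\Wop_0}=-\mu\ps{N}{\Id}>0$. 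The Ricci flow constraint $\ps{N}{Q(\R^{*})}\leq 0$ then reduces to $(n-1)\mu(\mu-\lambda)\ps{N}{\Id}\leq 0$, whose left-hand side is strictly positive for $n\geq 4$ and $0<\mu<\lambda$: contradiction.

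The main obstacle is the vanishing identity $B(\Id,\Wop)=0$ on $\mathcal{W}$; once this is verified, the case analysis produces either $\Wop_0\in\mathcal{C}$ or $\mathcal{C}=S^2_B\Lambda^2\mathbb{R}^n$, and invoking the representation-theoretic remark followed by the preceding theorem yields $\mathcal{C}\in\{\mathcal{C}_{\scal},\,S^2_B\Lambda^2\mathbb{R}^n\}$.
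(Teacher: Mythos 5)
Your proof is correct, but it takes a genuinely different route from the paper's. The paper's argument is dynamical: using the same two algebraic inputs as yours (for a compact Einstein symmetric space $Q(\R_0)=c\R_0$, and the cross term $B(\Id,\Wop)$ vanishes on $\mathcal{W}$), it shows that the ODE $\R'=Q(\R)$ preserves the plane spanned by $\R_{\Id}$ and $\R_{\mathcal{W}}$, integrates the resulting decoupled system $\alpha'=\lambda\alpha^2$, $\beta'=\lambda\beta^2$ explicitly, and observes that a trajectory starting below the diagonal is a hyperbola asymptotic to a horizontal line; closedness and convexity of $\mathcal{C}$ then force the whole $\R_{\mathcal{W}}$-axis into $\mathcal{C}$. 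You replace the integration of the ODE by a static first-exit-time argument along the ray $\R_0-t\Id$, invoking the invariance condition only once, at the exit point $\R^{*}$, via a supporting hyperplane through the origin. Your version is more elementary (no trajectory asymptotics, no explicit integration) and isolates cleanly where the hypothesis $\R_0\in\operatorname{int}\mathcal{C}$ and the sign $0<\mu<\lambda$ enter; the paper's version has the advantage of exhibiting the full invariant two-parameter family of trajectories, which is the picture that motivates the result. Both reduce to the preceding theorem in the same way. One small point to tighten: your verification of $B(\Id,\Wop)=0$ by checking $\ps{(\Id\#\Wop)\eta}{\eta}=-\ps{\Wop\eta}{\eta}$ only on \emph{simple} bivectors $\eta$ is not quite sufficient as stated, since a symmetric endomorphism of $\Lambda^2\mathbb{R}^n$ is determined by its values on decomposable bivectors only if it satisfies the first Bianchi identity (for $n=4$ the Hodge star is a counterexample); you should either note that $B(\Id,\Wop)\in S^2_B\Lambda^2\mathbb{R}^n$ because $Q$ preserves that space, or simply quote the standard identity $B(\R,\Id)=\rho(\R)\wedge\id$, which specializes to your claim for Ricci-flat $\Wop$ and is also precisely what makes the paper's ODE system decouple.
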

\begin{rem}
  This theorem explains the following observation, whereas the complex
  projective space $\mathbb{CP}^n$ is the second example one usually
  gives when asked for a manifolds of positive curvature, when one
  considers Ricci flow invariant curvature conditions, it only has
  nonnegative curvature at best. The above theorem shows that
  ``nonnegative scalar curvature'' is the only Ricci flow invariant
  curvature condition for which $\mathbb{CP}^n$ is positively curved.
\end{rem}
\begin{rem}
  As in every \emph{even} dimension $2n$, with $n\geq 2$,
  $\mathbb{CP}^n$ is an Einstein symmetric space with non constant
  positive sectional curvature, the above theorem shows that in even
  dimensions there is no Ricci flow invariant curvature cone which
  contains the cone of operators with nonnegative sectional curvature.
\end{rem}
\begin{proof}[Sketch of proof:]
  Let $\R$ be the curvature operator of an Einstein symmetric space,
  a simple computation shows that $Q(\R)=\lambda\R$ (CITE BRENDLE). Since $\R$ is
  Einstein, we can decompose it as $\R=\R_{\Id}+\R_{\mathcal{W}}$. One
  then observes that solutions to $\R'=Q(\R)$ whose initial condition
  is of the form $\alpha\R_{\Id}+\beta\R_{\mathcal{W}}$ remain of this
  form. The components $\alpha$ and $\beta$ evolve according to the
  differential equations:
  
  \begin{equation}
  \begin{cases}
    \alpha'=\lambda\alpha^2\\
    \beta'=\lambda\beta^2.
  \end{cases}\label{eq:EDO}  
\end{equation}

  This system can be explicitly integrated and one sees from this that
  that all solutions escape to infinity and fall into three classes
  (see Figure \ref{fig:trajectories}):
  \begin{itemize}
  \item The diagonal is the trajectory of a solution.
  \item When the initial condition lies below the diagonal, the
    trajectory is part of an hyperbola which is asymptotic to a
    horizontal line.
  \item When the initial condition lies above the diagonal, the
    trajectory is part of an hyperbola which is asymptotic to a
    vertical line.
  \end{itemize}
  Now if we assume that $\R$ is in the interior of $\mathcal{C}$, then
  $\mathcal{C}$ must contain some trajectory whose initial condition
  is below the diagonal, since this trajectory will be asymptotic to
  an horizontal line, the fact that $\mathcal{C}$ is closed and convex implies that
  $\mathcal{C}$ must contain the ``$x$-axis''. Hence $\mathcal{C}$
  contains $\R_{\mathcal{W}}$ which is not zero because we assumed
  that $\R$ is the curvature operator of an Einstein symmetric space
  with non constant sectional curvature. Thus $\mathcal{C}$ contains
  $\mathcal{W}$ and we can apply the previous theorem to conclude that
  $\mathcal{C}$ is one of the cones $\mathcal{C}_{\scal}$ or the whole
  space $S^2_B\Lambda^2\mathbb{R}^n$.
  \begin{figure}[h]
    \centering
    \begin{tikzpicture}[scale=0.9]
      \draw[->] (0,0) -- (0,5) node[above] {$\R_{\Id}$};
      \draw[->] (0,0) -- (5,0) node[right] {$\R_{\mathcal{W}}$};
      \draw[->,domain=0:5] plot (\x,\x);
      \draw[->,domain=0:5] plot ({(2*\x)/(2+\x)},\x);
      \draw[->,domain=0:5] plot (\x,{(2*\x)/(2+\x)});
      \draw[->,domain=0:5] plot (\x,{(0.95*\x)/(0.96+0.05*\x)});
      \draw[->,domain=0:5] plot ({(0.95*\x)/(0.96+0.05*\x)},\x);
      \draw (2,2) node {$\bullet$};
      \draw (2,2) node[above left] {$\R$};
      %\draw[dashed] (2,2) circle (0.5cm);
    \end{tikzpicture}
    \caption{Trajectories of $\R'=Q(\R)$ in the $(\R_{\Id},\R_{\mathcal{W}})$ plane,
      where $\R$ is the curvature operator of an Einstein symmetric
      space with non constant sectional curvature. (Equation \eqref{eq:EDO}.)}
    \label{fig:trajectories}
  \end{figure}
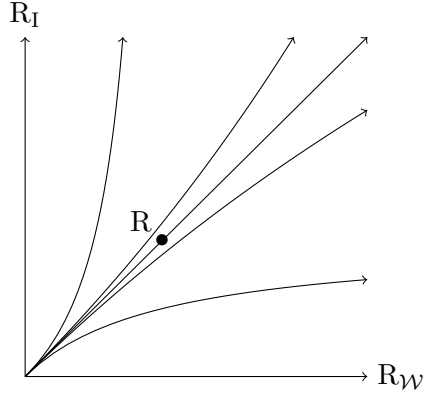
\end{proof}

\subsection{The case of dimension 4}
\label{sec:case-dimension-4}

Besides these dimension independent restrictions, we can prove much
stronger results in dimension 4.

When we consider $S^2_B\Lambda^2\mathbb{R}^4$ as a representation of
$O(4,\mathbb{R})$ we have the usual decomposition of
$S^2_B\Lambda^2\mathbb{R}^4$ into irreducible representations:
\[S^2_B\Lambda^2\mathbb{R}^4=\mathbb{R}\Id\oplus
(S^2_0\mathbb{R}^4\wedge\id)\oplus \mathcal{W}.
\]

However, when one considers only the action of $SO(4,\mathbb{R})$ on
$S^2_B\Lambda^2\mathbb{R}^4$, the component $\mathcal{W}$ is no longer 
irreducible and splits as
$\mathcal{W}=\mathcal{W}_+\oplus\mathcal{W}_-$, the self dual and anti
self dual Weyl tensors. This fact is specific
to dimension $4$ and can be linked to the fact that the Lie algebra
$\mathfrak{so}(n,\mathbb{R})$ is irreducible except when $n=4$ where
we have the splitting:
$\mathfrak{so}(4,\mathbb{R})\simeq\mathfrak{so}(3,\mathbb{R})\oplus\mathfrak{so}(3,\mathbb{R})$. 

This reducibility makes it interesting to weaken the requirements made
for curvature cones:
\begin{dfn}
  An \emph{oriented} curvature cone is a closed convex cone $\mathcal{C}\subset
  S^2_B\Lambda^2\mathbb{R}^4$ which is invariant under the action of
    $SO(4,\mathbb{R})$.
\end{dfn}
\begin{rem}
  We define a nonnegative oriented curvature cone and Ricci flow
  invariant oriented curvature cone in the same way as their
  unoriented counterparts. The main difference we will encounter is
  that $\mathcal{C}$-nonnegative curvature will now only make sense
  for \emph{oriented} Riemannian 4-manifolds.
\end{rem}
Let us give an example of a nonnegative oriented curvature cone which
is not a curvature cone:
\begin{dfn}
  A curvature operator $\R\in S^2_B\Lambda^2\mathbb{R}^4$ is to be
  \emph{positively half-$\overline{PIC}$} (in 
  short $\overline{PIC}_+$) if and only if for every \emph{positively oriented} orthonormal
  $4$-frame $(e_1,e_2,e_3,e_4)$:
  \begin{equation}
    \R_{1313}+\R_{1414}+\R_{2323}+\R_{2424}-2\R_{1234}\geq 0. \label{eqPICplus}  
  \end{equation}
  The oriented curvature cone of all such curvature operators will be
  denoted by $\mathcal{C}_{IC_+}$.

  Similarly, the cone $\mathcal{C}_{IC_-}$ of \emph{negatively half-$\overline{PIC}$}
  curvature operators is defined by requiring inequality
  \eqref{eqPICplus} to hold for every negatively oriented $4$-frame.
\end{dfn}
\begin{rem}
  The cones $\mathcal{C}_{IC_+}$ and $\mathcal{C}_{IC_-}$ lie between the cones
  $\mathcal{C}_{\scal}$ and $\mathcal{C}_{IC}$, and are Wilking
  cones. It can also be proven that they are Ricci flow invariant.
\end{rem}
\begin{ex}
  Besides $PIC$ $4$-manifolds such as $\mathbb{S}^4$ and
  $\mathbb{S}^3\times\mathbb{R}$, another example of $4$-manifold with
  strictly $\mathcal{C}_{IC_+}$-positive curvature is given by
  $\overline{\mathbb{CP}}^2$, the complex projective plane with the
  Fubini-Study metric and reversed orientation.
\end{ex}

In \cite{2013arXiv1311.5256R}, H. Seshadri and the author used the
results from \cite{2013arXiv1308.1190R} to prove that the cones
$\mathcal{C}_{IC_+}$ and $\mathcal{C}_{IC_-}$ enjoy some kind of
maximality among oriented Ricci flow invariant nonnegative curvature cones in
dimension 4.

\begin{thm}
  Let $\mathcal{C}\subset S^2_B\Lambda^2\mathbb{R}^4$ be an oriented
  curvature cone.
  \begin{itemize}
  \item If $\mathcal{C}\subsetneq \mathcal{C}_{\scal}$, then
    $\mathcal{C}$ is contained in either $\mathcal{C}_{IC_+}$ or $\mathcal{C}_{IC_-}$.
  \item If $\mathcal{C}$ is an (unoriented) curvature cone and
    $\mathcal{C}\subsetneq \mathcal{C}_{\scal}$, then 
    $\mathcal{C}$ is contained in $\mathcal{C}_{IC}$.
  \end{itemize}
\end{thm}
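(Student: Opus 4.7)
Part (b) follows from part (a) by orientation reversal: if $\mathcal{C}$ is $O(4)$-invariant and part (a) gives $\mathcal{C}\subset\mathcal{C}_{IC_+}$, then for any reflection $\sigma\in O(4)\setminus SO(4)$ we have $\mathcal{C}=\sigma(\mathcal{C})\subset\sigma(\mathcal{C}_{IC_+})=\mathcal{C}_{IC_-}$, whence $\mathcal{C}\subset\mathcal{C}_{IC_+}\cap\mathcal{C}_{IC_-}=\mathcal{C}_{IC}$. The real content lies in part (a), which I would approach by contradiction.

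Assume $\mathcal{C}\subsetneq\mathcal{C}_{\scal}$ is an oriented Ricci flow invariant nonnegative curvature cone, but $\mathcal{C}\not\subset\mathcal{C}_{IC_+}$ and $\mathcal{C}\not\subset\mathcal{C}_{IC_-}$. The goal is to force $\mathcal{W}=\mathcal{W}_+\oplus\mathcal{W}_-\subset\mathcal{C}$ as a vector subspace, and then invoke the $SO(4)$-analog of the main theorem of Section~\ref{sec:dimens-indep-restr} to conclude $\mathcal{C}\in\{\mathcal{C}_{\scal},S^2_B\Lambda^2\mathbb{R}^4\}$, contradicting the hypothesis.

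For the extraction of a nonzero element of $\mathcal{W}_+$, select $\R\in\mathcal{C}\setminus\mathcal{C}_{IC_-}$ and perform the trajectory analysis of the ODE $\R'=Q(\R)$ employed in the Einstein-symmetric corollary of Section~\ref{sec:dimens-indep-restr}. The idea is that since the PIC$_-$ violation of $\R$ is a condition on the self-dual part of the curvature, suitably rescaled asymptotic limits of the trajectory remain in the closed cone $\mathcal{C}$ and have a nonzero component in $\mathcal{W}_+$. Once a single $0\neq W_+\in\mathcal{C}\cap\mathcal{W}_+$ is produced, I would upgrade this to $\mathcal{W}_+\subset\mathcal{C}$ by a Haar-averaging argument: since $\mathcal{W}_+$ is a nontrivial irreducible $SO(4)$-representation, the average $\int_{SO(4)}g\cdot W_+\,d\mu=0$ expresses $-W_+$ as a conical combination of $SO(4)$-orbit elements lying in $\mathcal{C}$, so $\mathcal{C}$ contains $\pm W_+$ and hence, by irreducibility applied to the convex cone containing both, all of $\mathcal{W}_+$. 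A symmetric argument starting from $\mathcal{C}\not\subset\mathcal{C}_{IC_+}$ produces $\mathcal{W}_-\subset\mathcal{C}$.

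With $\mathcal{W}\subset\mathcal{C}$ as a vector subspace, I would adapt the proof of the main theorem of Section~\ref{sec:dimens-indep-restr} to the $SO(4)$-invariant setting. Its only use of full $O(n)$-invariance was the irreducibility of $S^2_0\mathbb{R}^n$ to deduce $\mathcal{C}\cap(S^2_0\mathbb{R}^4\wedge\id)=\{0\}$, and $S^2_0\mathbb{R}^4$ remains $SO(4)$-irreducible; the subsequent algebraic construction producing $\R_0\in S^2_0\mathbb{R}^4\wedge\id$ and $\Wop\in\mathcal{W}$ with $Q(\R,\Wop)=\R_0$ pointing outside $\mathcal{C}$ is dimension-independent and transfers verbatim. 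The hard part is the previous paragraph: making precise the heuristic that a PIC$_\mp$ violation forces a $\mathcal{W}_\pm$ element in $\mathcal{C}$. This appears to require a careful blow-up analysis of the quadratic ODE $\R'=Q(\R)$ in the finer block decomposition $\mathbb{R}\Id\oplus S^2_0\mathbb{R}^4\wedge\id\oplus\mathcal{W}_+\oplus\mathcal{W}_-$, ensuring that the rescaled limits lie in the correct irreducible summand, since generic trajectories of $Q$ mix all four components.
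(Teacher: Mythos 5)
The survey does not actually reproduce a proof of this theorem (it is quoted from the companion paper with Seshadri), so your proposal can only be judged on its own terms and against the strategy the text indicates, namely that the result is \emph{deduced from} the theorem of Section~\ref{sec:dimens-indep-restr}. Your architecture matches that strategy. The reduction of the second item to the first via an orientation-reversing element of $O(4)$ and the identity $\mathcal{C}_{IC_+}\cap\mathcal{C}_{IC_-}=\mathcal{C}_{IC}$ is correct. The upgrade from one nonzero $W_+\in\mathcal{C}\cap\mathcal{W}_+$ to $\mathcal{W}_+\subset\mathcal{C}$ is also correct in substance, though your justification is loose: the vanishing of the Haar average does not literally exhibit $-W_+$ as a conical combination of orbit elements; the clean statement is that a nonzero closed convex $SO(4)$-invariant cone in a nontrivial irreducible representation is the whole space (pair a putative nonzero element of the dual cone against the orbit and average). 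Your observation that the argument of Section~\ref{sec:dimens-indep-restr} survives the passage from $O(4)$ to $SO(4)$ because $S^2_0\mathbb{R}^4$ remains irreducible is also plausible.

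The genuine gap is the step you yourself flag: extracting a nonzero element of $\mathcal{C}\cap\mathcal{W}_\pm$ from the failure of one half-PIC inequality. As written this is a hope rather than an argument, and the mechanism you propose --- rescaled asymptotic limits of trajectories of $\R'=Q(\R)$ --- is not merely incomplete but unlikely to work for a general $\R$. In the Einstein symmetric space corollary the trajectory analysis succeeds only because $Q(\R)=\lambda\R$ confines the entire trajectory to the $Q$-invariant two-plane spanned by $\R_{\Id}$ and $\R_{\mathcal{W}}$, where the blow-up can be integrated explicitly. An arbitrary operator violating \eqref{eqPICplus} lies in no such $Q$-invariant subspace; its trajectory mixes all four summands of $\mathbb{R}\Id\oplus(S^2_0\mathbb{R}^4\wedge\id)\oplus\mathcal{W}_+\oplus\mathcal{W}_-$, and since $\tr Q(\R)=2|\rho(\R)|^2\ge 0$ the rescaled blow-up limit will generically carry a nonzero $\Id$-component rather than land in a Weyl summand. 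What is needed at this point is an algebraic input specific to dimension $4$: the block decomposition of $\R$ and of $Q$ relative to $\Lambda^2\mathbb{R}^4=\Lambda^2_+\oplus\Lambda^2_-$ (Hamilton's $A,B,C$ blocks, with $\R^\#$ acting blockwise through the $\mathfrak{so}(3)$-adjoint), combined with the $B(\R,\Wop)\in T_{\R}\mathcal{C}$ device of Section~\ref{sec:dimens-indep-restr} for suitably chosen $\Wop$, so as to convert the strict negativity of the relevant eigenvalue sum into an explicit Weyl-type element of $\mathcal{C}$. Until that computation is supplied, the proof is missing its central step.
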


\section{Open questions}
\label{sec:open-questions}

We end this survey by some open questions about Ricci flow invariant
curvature cones. 

We have proved in section \ref{sec:dimens-indep-restr} that, in
dimension 4 and above, no Ricci
flow invariant cone can contain every curvature operator with nonnegative
Ricci curvature, except the cone $\mathcal{C}_{\scal}$. We also saw
that, in even dimension greater than 4, no Ricci flow invariant cone
can contain every curvature operator with nonnegative 
sectional curvature, except the cone $\mathcal{C}_{\scal}$.
It is thus natural to ask:
\begin{quest}
  In odd dimension 5 and above, does there exist Ricci flow invariant
  cones which contain every curvature operator with nonnegative
  sectional curvature ?
\end{quest}

In a similar but slightly more ambitious trend, one might want to
find the biggest non trivial Ricci flow invariant cone. We already
have a candidate for this:
\begin{quest}
What are the biggest Ricci flow invariant curvature cones ? Is there
any curvature cone, other that $\mathcal{C}_{\scal}$ which is not
contained the cone $\mathcal{C}_{IC}$ ?
\end{quest}
The last section answers this question affirmatively in dimension
4 (if one ignores oriented curvature cones). We also saw in the
beginning of section \ref{sec:dimens-indep-restr} that the answer is
yes when the investigation is restricted to Wilking cones.

Answering the following questions would make the matter of finding
sphere theorems using Ricci flow closed:
\begin{quest}
  Does there exist a Ricci flow invariant curvature cone with the
  pinching property which is not contained in $\mathcal{C}_{IC1}$ ? Is
  there a maximal Ricci flow invariant curvature cone with the
  pinching property ?
\end{quest}
We have seen in section \ref{sec:dimens-indep-restr} that
$\mathcal{C}_{IC1}$ is maximal among Wilking cones with the pinching
property. 

A Ricci flow with surgeries has been constructed for general
closed $3$-manifolds by Perelman and for closed $PIC$ 
$4$-manifolds by Hamilton. It is still unknown wether or not a Ricci
flow with surgeries can  be built for $PIC$ manifolds of arbitrary
dimensions. Next question is a possible step towards this goal:
\begin{quest}
  Can one pinch the cone $\mathcal{C}_{IC}$ towards a smaller cone ?
  For instance, can one find a continuous family of cones
  $(\mathcal{C}_s)_{s\in[0,1]}$ such that:
  \begin{itemize}
  \item $\mathcal{C}_0=\mathcal{C}_{IC}$,
  \item for each $s\in (0,1)$, $Q$ points strictly inwards $\mathcal{C}_s$,
  \item $\mathcal{C}_1$ is a ``geometrically constraining'' cone, such
    as $\mathcal{C}_{PCO}$ or $\mathcal{C}_{IC1}$.
  \end{itemize}
\end{quest}
The existence of such a ``generalized pinching family'' should imply
that blow up of singularities of $PIC$ Ricci flows should have
$\mathcal{C}_{1}$-nonnegative curvature, which would make their
study easier. This would play a role
similar to the role of the Hamilton-Ivey estimate in dimension 3.

It would also be interesting to see how these results can be adapted
to the Kaehler-Ricci flow. Note first that Wilking's construction
-works in the Kaehler case with only minor modifications (this is
treated in \cite{zbMATH06185842}). However, for results in the spirit
of section \ref{sec:restr-ricci-flow}, it seems that a better
understanding of how $Q$ interacts with the representation theory of
Kaehler curvature operators.

\bibliographystyle{alpha}
\bibliography{TSG-RF_Cones}
\end{document}